\documentclass[reqno,A4paper]{amsart}

\usepackage{amssymb}
\usepackage{amsthm}
\usepackage{graphicx}
\usepackage{xcolor}
\usepackage[normalem]{ulem}

\theoremstyle{plain}
\newtheorem{Theorem}{Theorem}[section]
\newtheorem{Lemma}[Theorem]{{ Lemma}}
\newtheorem{Corollary}[Theorem]{{ Corollary}}
\newtheorem{Definition}[Theorem]{{ Definition}}
\newtheorem{Example}[Theorem]{{ Example}}
\newtheorem{Remark}[Theorem]{{ Remark}}

\begin{document}
	
	\title[On Certain forms of Transitivities for Linear Operators]
	{On Certain forms of Transitivities for Linear Operators}

		\author{Nayan Adhikary and Anima Nagar}
	
	\address{Department of Mathematics, Indian Institute of Technology Delhi,
		Hauz Khas, New Delhi 110016, INDIA.}
	
	\email{nayanadhikarysh@gmail.com, anima@maths.iitd.ac.in}
	
	\subjclass[2020]{ 47A16, 37B05 }
	
	\begin{abstract} 
		In this article we give several characterizations for various transitivity properties for linear operators. We define a general form of `Hypercyclicity Criterion' using  a Furstenberg family $\mathcal{F}$ to characterize $\mathcal{F}$-transitive operators. In particular, we find an equivalent characterization for mixing operators. We study proximal and asymptotic relations for linear operators and prove that the difference between mixing operators and Kitai's Criterion can be presented through these relations. Finally, we find an equivalent characterization of strongly transitive  abd strongly product transitive operators.
	\end{abstract}
	\maketitle
	\smallskip
	\noindent{\bf\keywordsname{}:} {hypercyclic operator, Hypercyclicity Criterion, $\mathcal{F}$-transitive operator, $\mathcal{F}$-Transitivity Criterion, mixing, strongly transitive, strongly product transitive.}

	\section{Introduction:}

    A discrete dynamical system is a pair $(X,T),$ where $X$ is usually a Hausdorff space and $T$ is a continuous self map defined on $X.$  In topological dynamics, one of the oldest and important property is - `` transitivity''. A discrete dynamical system $(X,T)$ is called \emph{topologically transitive} or \emph{transitive} (in short) if for every pair of non-empty open subsets $U$ and $V$ of $X$, there exists $n\in \mathbb{N}$ such that $T^n(U)\cap V \neq \emptyset$. We follow the various versions of transitivity as studied in \cite{AN, Anima}, where  versions of transitivity are studied for compact  $X$. Baire category arguments show that for seperable Baire spaces $X$, $(X,T)$ is topologically transitive if and only if there exists a point $x_0\in X$ such that its orbit $\mathcal{O}_T(x_0) =  \mathcal{O}(x_0)=\{T^n(x_0):n\in \mathbb{Z}_+\}$ is dense in $X$ - the later property is called \emph{point transitivity}. And such a point $x_0\in X$ is called a \emph{transitive point}.  Stronger versions of transitivity is \emph{mixing} when for every pair of non-empty open  sets $U, V \subseteq X$ there exists an $N \in \mathbb{N}$ such that $T^n(U) \cap V \neq \emptyset$ for all $n \geq N$, and \emph{weakly mixing} when $(X \times X,T \times T)$ with the diagonal action is topologically transitive . We follow \cite{GH} for associated topological dynamics. 
    
%

    \bigskip
    
    In this article, we aim to study the versions of transitivity for  an infinite dimesional separable $F$-space $X$ with a bounded linear operator $T$ acting on it giving a linear dynamical system $(X,T)$. Note that here $X$ is never compact however for Banach $X$ \emph{every linear dynamical system     has a dynamical compactification } [Lemma 2.4 \cite{M}], where a system $(\widehat{X},\widehat{T})$ is called a \emph{dynamical compactification}  of the linear dynamical system $(X,T)$ if $\widehat{X}$ is a compact metric space, $\widehat{T}$ is continuous, $X$ is embedded as a dense subset of $\widehat{X}$, and $\widehat{T}_{|X} = T$. Note that  then 
$X$ is  a $\mathit{G}_\delta$ subset of $\widehat{X}$ by the completeness property. We observe that transitivity properties of $(\widehat{X},\widehat{T})$ can be easily passed on to $(X,T)$, though this is not a new observation.

\bigskip
       
    The collection $ \mathcal{H}(\mathbb{C}) $ of functions that are holomorphic on $ \mathbb{C} $ endowed with the compact-open topology is an $ F $-space.  \emph{G. D. Birkhoff} in 1929 showed that for each $ a \neq 0 \in  \mathbb{C} $ the translation operator     $ T_a $  on $ \mathcal{H}(\mathbb{C}) $ defined as 
    $$T_af(z) = f(z + a) \ \ (z \in \mathbb{C}, \  f \in \mathcal{H}(\mathbb{C}))$$    
    has a dense orbit \cite{birkhoff}, providing the first known example of  transitive operator on a separable $F$-space. In 1952, \emph{MacLane} \cite{gm} showed that the derivative operator has a dense orbit on $ \mathcal{H}(\mathbb{C}) $. Later \emph{Rolewicz} \cite{sr} in 1969 constructed examples of vectors with dense orbits for operators on Banach spaces.
    
    This triggered the study of linear dynamics, which formally began with the unpublished Toronto thesis of \emph{Carol Kitai} \cite{kitai} who studied  operators that admit vectors with dense orbits. She called such vectors \emph{`orbital'}. In 1984,  Beauzamy \cite{b1} introduced the term \emph{``hypercyclic”} for such vectors with dense orbits as a variation to \emph{cyclic vectors}. The study of hypercyclic vectors  formally began with \cite{b1,b2,b3}.   It is known that hypercyclic vectors cannot exist in finite dimensional topological vector spaces[Theorem 1.2 \cite{kitai}].  If $X$ is an infinite dimensional, separable $ F$-space and $T$  a bounded linear operator on $X$, then $T$ is called a \emph{hypercyclic operator} if  there exists a vector $x_0\in X$ such that its orbit $\mathcal{O}_T(x_0) $ is dense in $X$ and $x_0$ is called a \emph{hypercyclic vector} for $T$. Moreover,  the set of all hypercyclic vectors for $T$ is a dense $\mathit{G}_{\delta}$ subset of $X.$  We refer to \cite{linear, linear chaos, shapiro} for  details on linear dynamics.  
    
    \bigskip
    
     Let $\mathcal{L}(X)$  denote the set of all bounded linear operators defined on the linear space $X.$     
    Due to the linear structure, several properties of topological dynamics can be nicely formulated in linear dynamics. One of such significant properties  is the \emph{Hypercyclicity Criterion}, which gives a set of sufficient conditions of hypercyclicity: 
    \begin{Definition}\cite{linear,Bes} \label{HC}
		Let $X$ be an infinite dimensional,  separable $F$-space and $T \in \mathcal{L}(X)$. Then we say that $T$ satisfies the \emph{Hypercyclicity Criterion} with respect to the increasing sequence $(n_k)$ of positive integers if there exist two dense subsets $D_1, D_2$ of $X$ and a sequence of mappings $S_{n_k}:D_2 \to X$ such that\par 
		$(i)$ $T^{n_k} x \to 0$ for every $x\in D_1;$ \par 
		$(ii)$ $S_{n_k} y \to 0$ for every $y\in D_2;$ \par
		$(iii)$ $|| T^{n_k}S_{n_k}y - y|| \to 0$ as $k\to \infty$ for every $y\in D_2.$ \\
		We say that $T$ satisfies Hypercyclicity Criterion if there exists an increasing sequence $(n_k)$ of positive integers such that $T$ satisfies Hypercyclicity Criterion with respect to this sequence.
	\end{Definition}
	 For  an infinite dimensional, separable $F$-space $X$ and $T \in \mathcal{L}(X)$, $(X,T)$ is weakly mixing if and only if $T \oplus T$ is hypercyclic if and only if $T$ satisfies Hypercyclicity Criterion [Theorem 2.3 \cite{Bes2}].  On the other hand, Hypercyclicity Criterion with respect to the full sequence $(n)$ is known as \emph{Kitai's Criterion} since it was first discussed in \cite{kitai}.
    \begin{Definition} \cite{linear}
       A bounded linear operator $T$ defined on an infinite dimensional, separable $F$-space satisfies \emph{Kitai's Criterion} if there exist two dense subsets $D_1, D_2$ of $X$ and a sequence of mappings $S_{n}:D_2 \to X$ such that\par 
		$(i)$ $T^{n} x \to 0$ for every $x\in D_1;$ \par 
		$(ii)$ $S_{n} y \to 0$ for every $y\in D_2;$ \par
		$(iii)$ $|| T^{n}S_{n}y - y|| \to 0$ as $k\to \infty$ for every $y\in D_2.$  
    \end{Definition} 
        If $T$ satisfies Kitai's Criterion then $T$ is \emph{mixing}.
        
        \bigskip
        
         In [\cite{shapiro} page 35] Shapiro raised the question  whether every mixing operator satisfies Kitai’s Criterion? In [ Theorem 2.5 \cite{mixing example}], S. Grivaux proved that every infinite dimensional, Banach space admits a mixing operator which does not satisfy the Kitai's Criterion. Recently in [Theorem 3.5 \cite{composition}],  Gomes and Grosse-Erdmann have construted a mixing composition operator which does not satisfy Kitai's Criterion. So a natural question arises   whether there is any criterion that gives mixing operators. We answer this question affirmatively.

        Also in topological dynamics several forms of $\mathcal{F}$-transitivity can be defined using a Furstenberg family $\mathcal{F}$ and they play an important role to study dynamics of continuous maps. Analogously one can ask whether there is any form of Hypercyclicity Criterion which is equivalent to these $\mathcal{F}$-transitive systems in linear dynamics. Some such characterizations were obtained in \cite{bes ST}. We delve further into finding such equivalent characterizations of  transitive operators with respect to  Furstenberg families.\par 
        
        We recall some stronger versions of transitivity in topological dynamics studied in \cite{AN, Anima}. We discuss characterization for the same in case of linear dynamics.
       
       \bigskip 
        
        In this article we give the needed prerequisites in Section 2. In Section 3, we define Hypercyclicity Criterion using Furstenberg family $\mathcal{F}$ (Definition \ref{def1}) and then prove that this general form of Hypercyclicity Criterion is equivalent to any $\widetilde
        {\mathcal{F}}$-transitive operator (Theorem \ref{hyper}). We define several relations like asymptoticity and proximality for linear operators and then discuss their relations with $\mathcal{F}$-transitivity. Finally in  Section 4 we consider  strong transitivity  for linear operators and find its equivalent characterization (Theorem \ref{ST}). 
        
        \section{Preliminaries}

	 For the  dynamical system $(X,T)$, we consider  the \emph{hitting time sets} $N_{T}(x,V) = \{n \in \mathbb{N}: {T}^n (x) \in V\}$, $N_{T}(U,V)=\{n\in \mathbb{N}: {T}^n(U)\cap V \neq \emptyset\} = \{n\in \mathbb{N}: U \cap {T}^{-n}(V) \neq \emptyset\}$, and by taking $V=\{x\},$ the set $N_{T}(U,x)=\{n\in \mathbb{N}: x\in {T}^n(U)\}$, where $x \in X$ and non-empty $U,V  \subset X$ are  open in $X$. Recall that $ A \subset \mathbb{N} $ is called \emph{syndetic} if there exists some $M\in \mathbb{N}$ such that for every $n\in \mathbb{N},$ $\{n,n+1,\dots, n+M\}\cap A \neq \emptyset$, \emph{thick} if  for any $k\in \mathbb{N},$ there exists $n\in \mathbb{N}$ such that $\{n,n+1,\dots, n+k\}\subset A$, and \emph{cofinite} if there is $N \in \mathbb{N}$ such that $A \supset \mathbb{N} \setminus \{1,2,\ldots, N\}$.
	 
	 We start by recalling some concepts from topological dynamics following \cite{akin,AN, F2, GH}. Since these studies were essentially on compact spaces, we consider compactifications to unfold our definitions. Let $X$ be an infinite dimensional, seperable Banach space, $T \in \mathcal{L}(X)$ and $(\widehat{X},\widehat{T})$ be a dynamical compactification of $(X,T)$.  The word `opene' henceforth denotes a non-empty open set. 
	
	  The 	 system $(\widehat{X},\widehat{T})$  is \emph{topologically transitive} if the set $N_{\widehat{T}}(U,V)$ is nonempty for any two  opene $U,V \subset \widehat{X}$, and is \emph{point transitive} if there exists $x_0 \in \widehat{X}$ such that $\overline{\mathcal{O}(x_0)}$ is dense in $\widehat{X}$, equivalently the hitting time set $N_{\widehat{T}}(x_0,V)$ is non-empty for every opene $V \subset \widehat{X}$. Such a point $x_0$ is called a \emph{transitive point} and there are  $\mathit{G}_\delta$ dense transitive points for any transitive system. Since $\widehat{X}$ is complete without isolated points, the concepts of point transitivity and topological transitivity coincide here. The 	 system $(\widehat{X},\widehat{T})$  is \emph{weakly mixing} if  the set $N_{\widehat{T}}(U,V)$ is thick. The 	 system $(\widehat{X},\widehat{T})$  is  \emph{mixing} if  the set $N_{\widehat{T}}(U,V)$ is cofinite.

	  The 	 system $(\widehat{X},\widehat{T})$  is  \emph{minimal} if $\widehat{X}$ contains no proper closed invariant subset, i.e. for all $x \in \widehat{X}$ and open $U \ni x$ the set $N_{\widehat{T}}(x,U)$ is syndetic.   The property of minimality has significant importance in topological dynamics and minimality is stronger than transivity. Though linear dynamical systems can never be minimal since the origin is always a fixed point. Though, interestingly some properties of minimal systems hold for linear systems too such as Theorem \ref{RP}. Also, as can be seen in Theorem \ref{minimal},  an Auslander-Yorke Dichotomy  also holds in linear dynamics.\par 
	
	\bigskip
	
	Note that for $x \in X$ and opene  $U,V \subset \widehat{X}$, the hitting sets 
	
	$$N_{\widehat{T}}(x,V) =  N_T(x, V \cap X) = \{n \in \mathbb{N}: T^n (x) \in V \cap X\},$$	
	 $$N_{\widehat{T}}(U,V)= N_T(U\cap X, V \cap X) = \{n\in \mathbb{N}: T^n(U)\cap V \cap X \neq \emptyset\} = \{n\in \mathbb{N}: X \cap U \cap T^{-n}(V) \neq \emptyset\}$$ and by taking $V=\{x\},$ the set $$N_{\widehat{T}}(U,x)= N_T(U,x) = \{n\in \mathbb{N}: x\in T^n(U \cap X )\}.$$
	
	\bigskip
	
	We can thus extend all the above definitions to the (non compact) linear system $(X,T)$ and observe that $(X,T)$ is transitive, weakly mixing,  and mixing if and only if $(\widehat{X},\widehat{T})$  is also so.
	
Since these definitions depend only on the open sets, they can be defined for any topological vector space $X$. However, we would want point transitivity to be equivalent to transitivity and thus would require the topological vector space to be an infinite dimensional, seperable Baire space.	

\bigskip

We however note here an important distinction between transitivity on compact spaces in topological dynamics and hyperclicity on infinite dimensional, seperable $F$-space. Transitive maps on compact spaces are always surjective, whereas hypercyclic operators on infinite dimensional, seperable $F$-spaces need not be so. We consider a counterexample:

\begin{Example} \label{wei-seq}
	We recall the unilateral weighted backward shift operator as first considered in \cite{weight}.

	Let $B_w : \ell^p(\mathbb{N}) \to \ell^p(\mathbb{N})$ be the unilateral weighted backward shift operator, defined as $B_w(x_1,x_2,\dots) = (w_1x_2, w_2x_3, \dots )$ with bounded weights $w = (w_n)_{n \in \mathbb{N}}$ of real sequences. The conditions for the linear system $(\ell^p(\mathbb{N}), B_w)$ to be hypercyclic  are considered as Theorem 2.8 in \cite{ weight} and Theorem 1.40 in \cite{linear}.
	
		We take a particular case here. Consider the weight sequence 
	$$w=(w_n)= (2,\frac{1}{2},2,2,\frac{1}{4},2,2,2,\frac{1}{8},2,2,2,2,\frac{1}{16},\dots)$$
	Note that in $w$, there corresponds a subsequence $(n_k)$ on natural numbers such that $w_{n_k}=\frac{1}{2^k}$ for every $k\in \mathbb{N}.$

	 Here $\prod_{j=1}^{n_k-1} w_j = w_1\dots w_{n_k-1}=2^k$ for every $k$, 	 
	  and so $\limsup \{\prod_{j=1}^n w_j = w_1\dots w_n: n \in \mathbb{N}\}=\infty.$ 
	  
	  Let $S_{\frac{1}{w}}$ be the weighted  unilateral shift operator on $\ell^p(\mathbb{N})$, where $$S_{\frac{1}{w}}(x_1,x_2,x_3,\cdots)=(0,\frac{x_1}{w_1},\frac{x_2}{w_2},\cdots).$$ Clearly $B_wS_{\frac{1}{w}}(x)=x$ for every $x\in \ell^p(\mathbb{N}).$
	  Take $D \subset \ell^p(\mathbb{N})$ to  be the set of all sequences   which are eventually zero. Then for every $x\in D$ there exists $n\in \mathbb{N}$ such that $B_w^n(x)=0.$ On the other hand, for every $k\in \mathbb{N},$ we have $\displaystyle{\sum_{j=1}^{n_k-1}}w_j=2^k.$ So for every $x\in D,$ $S_{\frac{1}{w}}^{n_k-1}(x)\to 0$ as $k\to \infty.$ Hence $B_w$ satisfies all the conditions of \emph{Hypercyclicity Criterion} (see Definition \ref{HC}). Therefore $B_w$ is hypercyclic.

	  \par
	  
	Pick $x=(x_n) \in \ell^p(\mathbb{N})$  such  that $x_{n_k}=1/2^k$ for every $k\in \mathbb{N}$ and $x_n=0$ otherwise. Then  $ B_w^{-1}(x)= \{(r,\frac{x_1}{w_1},\frac{x_2}{w_2},\dots,\frac{x_n}{w_n},\dots): r\in \mathbb{C}\}.$ This implies that there exists no $y\in \ell^p(\mathbb{N})$ such that $y  \in T^{-1}(x)$ and so $T^{-1}(x)=\emptyset$. Therefore $T$ is not surjective. 
\end{Example}

In what follows we take our linear space $X$ to be an infinite dimensional, seperable $F$-space and $T \in \mathcal{L}(X)$.  We continue to use the term transitivity for hypercyclicity  in cognizance with topological dynamics.
	 
We recall here

 \begin{Lemma} [Theorem 4.6 \cite{linear}] \label{F1}
	A linear dynamical system $(X,T)$ is weakly mixing if and only if for every pair of open sets $U,V \subset X$ the hitting time set $N_T(U,V)$ is non-empty and for opene sets $U_1,U_2,V_1,V_2 \subset X$ there exist another pair of  opene sets $U', V'$ such that $N(U',V')\subset N(U_1,V_1)\cap N(U_2,V_2).$ 
\end{Lemma}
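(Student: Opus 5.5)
The statement has two directions, which I handle separately. Here weakly mixing means that $(X\times X, T\times T)$ is topologically transitive, equivalently that $N_T(U_1,V_1)\cap N_T(U_2,V_2)\neq\emptyset$ for all opene $U_1,U_2,V_1,V_2\subset X$.

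\emph{Sufficiency.} Suppose every $N_T(U,V)$ is non-empty and the filter-type condition holds. Given opene $U_1,U_2,V_1,V_2$, the hypothesis produces opene $U',V'$ with
\[
N_T(U',V')\subset N_T(U_1,V_1)\cap N_T(U_2,V_2).
\]
Since $N_T(U',V')$ is non-empty, the intersection is non-empty. Taking $U_1\times U_2$ and $V_1\times V_2$ as basic opene sets of $X\times X$, this is exactly topological transitivity of $T\times T$. This direction is purely formal.

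\emph{Necessity.} Assume $T$ is weakly mixing. Non-emptiness of $N_T(U,V)$ follows by projecting the transitivity of $T\times T$ onto one coordinate. For the filter property, given $U_1,U_2,V_1,V_2$, I use transitivity of $T\times T$ on the pair $U_1\times V_1$, $U_2\times V_2$. This gives some $m$ with $U_1\cap T^{-m}(U_2)\neq\emptyset$ and $V_1\cap T^{-m}(V_2)\neq\emptyset$. Set
\[
U'=U_1\cap T^{-m}(U_2),\qquad V'=V_1\cap T^{-m}(V_2);
\]
these are opene by continuity of $T$.

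If $n\in N_T(U',V')$, pick $x\in U'$ with $T^n x\in V'$. Then $x\in U_1$ and $T^nx\in V_1$, so $n\in N_T(U_1,V_1)$. Also $T^m x\in U_2$ and $T^n(T^m x)=T^m(T^n x)\in V_2$, so $n\in N_T(U_2,V_2)$. Hence $N_T(U',V')\subset N_T(U_1,V_1)\cap N_T(U_2,V_2)$.

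The only delicate step is choosing to apply transitivity of $T\times T$ to the "crossed" pairs $(U_1,U_2)$ and $(V_1,V_2)$ rather than to $(U_i,V_i)$. This is the standard Furstenberg trick, and it uses only continuity and commutation of powers of $T$. In particular, neither linearity nor surjectivity of $T$ is needed, which matters given Example~\ref{wei-seq}, where the operator is not surjective.
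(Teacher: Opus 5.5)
Your proof is correct, and it is the standard Furstenberg argument behind the cited result; the paper gives no proof of its own, only the reference to Bayart--Matheron, and takes weak mixing to mean transitivity of $T\times T$, as you do. In particular, choosing $m\in N_T(U_1,U_2)\cap N_T(V_1,V_2)$ and setting $U'=U_1\cap T^{-m}(U_2)$, $V'=V_1\cap T^{-m}(V_2)$ is exactly the usual construction.
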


A pair $(x,y) \in X \times X$ is called \emph{proximal} if $\displaystyle{\liminf_{ n\to \infty}} ||T^n(x)-T^n(y)||=0,$  and is \emph{asymptotic} if $\displaystyle{\lim_{ n\to \infty}} ||T^n(x)-T^n(y)||=0.$ The proximal(asymptotic) pairs in $X \times X$ define the \emph{proximal(asymptotic)  relations $P$($A$)} on $X$.

 \begin{Definition}
	A family $G$ of linear operators defined on a topological vector space $X$ is called \emph{equicontinuous} if for every $\varepsilon>0$ there exists $\delta>0$ such that $||x-y||< \delta$ implies that $||T(x)-T(y)||<\varepsilon$ for all $T\in G.$\par 
	A linear dynamical system $(X,T)$ is called \emph{equicontinuous} if the family $\{T^n:n\in \mathbb{N}\}$ is equicontinuous.
\end{Definition}

We can equivalently consider equicontinuity at the origin in $X$. $(X,T)$ is \emph{equicontinuous at the origin} if  for every $\varepsilon>0$ there exists $\delta>0$ such that $||x||< \delta$ implies that $||T^n(x)||<\varepsilon$ for all $n\in \mathbb{N}$. It is not difficult to see that  $(X,T)$ is equicontinuous at the origin if and only if  $(X,T)$ is equicontinuous.

\begin{Definition}
	The linear  system $(X,T)$ is called \emph{sensitive} if there exists $\delta>0$ such that for every $x\in X$ and every $\varepsilon>0$ there exist a point $y\in X$ with $||x-y||<\varepsilon$ and $n\in \mathbb{N}$ for which $||T^n(x)-T^n(y)||\geq\delta.$ 
	
	This $\delta >0$ is called the \emph{sensitivity constant} for $T$.
\end{Definition}

We can equivalently consider sensitivity at the origin in $X$. $(X,T)$ is \emph{sensitive at the origin with sensitivity constant $\delta > 0$} if   there exists $\delta>0$ such that for every $\varepsilon>0$ there exists $x \in X$ with  $||x||< \varepsilon$ and $n\in \mathbb{N}$ for which $||T^n(x)|| \geq \delta$. It is not difficult to see that  $(X,T)$ is sensitive at the origin with sensitivity constant $\delta > 0$ if and only if  $(X,T)$ is sensitive with sensitivity constant $\delta > 0$.

\begin{Lemma} [Proposition 6.1 \cite{GS}]
	Every hypercyclic operator on an $ F $-space has sensitive dependence on
	initial conditions.
\end{Lemma}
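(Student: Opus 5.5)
We prove that a hypercyclic operator $T$ on an $F$-space $X$ is sensitive. By the remark after the definition of sensitivity, it is enough to prove sensitivity at the origin. So we must find $\delta>0$ such that for every $\varepsilon>0$ there are $x\in X$ with $\|x\|<\varepsilon$ and $n\in\mathbb{N}$ with $\|T^n x\|\geq\delta$. Here $\|\cdot\|$ denotes the $F$-norm inducing the topology. The idea is that the set of hypercyclic vectors is dense and stable under nonzero scalar multiples, and their orbits are dense, so they go far out.

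First, fix $\delta>0$ with $\delta<\sup_{z\in X}\|z\|$. This supremum is positive because $X\neq\{0\}$; it may be finite for an $F$-norm, which is why we do not simply pick $\delta$ arbitrary. Then the open set $W=\{z:\|z\|>\delta\}$ is nonempty.

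Next, fix $\varepsilon>0$. The set of hypercyclic vectors is a dense $G_\delta$ subset of $X$, as recalled in the introduction, so it meets the ball $\{x:\|x\|<\varepsilon\}$. Pick a hypercyclic vector $x$ in this ball. Since $\mathcal{O}(x)$ is dense, it meets $W$, so there is $n$ with $\|T^n x\|>\delta$. If one insists on $n\geq1$, note that $x\in W$ is impossible once $\varepsilon\le\delta$. In general, the orbit minus finitely many points is still dense, since $X$ has no isolated points.

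This gives sensitivity at the origin with constant $\delta$. Linearity transfers it to every point: for $x_0\in X$ set $y=x_0+x$; then $T^n y-T^n x_0=T^n x$. So $T$ is sensitive with constant $\delta$. The only delicate point is choosing $\delta$ so that $W$ is nonempty for a general $F$-norm; that is handled in the first step, and the rest is routine.
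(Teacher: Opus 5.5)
Your proof is correct. Choosing $\delta<\sup_{z\in X}\|z\|$ is the right precaution, since an $F$-norm may be bounded. Picking a hypercyclic vector $x$ with $\|x\|<\varepsilon$ and letting its dense orbit enter the nonempty open set $\{z:\|z\|>\delta\}$ at some $n\geq 1$ gives sensitivity at the origin, and translation invariance, $T^n(x_0+x)-T^n x_0=T^n x$, transfers it to every point.

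The paper gives no proof of this lemma; it only cites Proposition 6.1 of \cite{GS}. Your argument is a standard self-contained proof of that result and needs nothing further.
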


\begin{Theorem}\label{minimal}
	A linear dynamical system is either sensitive or equicontinuous.
\end{Theorem}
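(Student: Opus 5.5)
By the two remarks preceding the theorem, equicontinuity of $(X,T)$ reduces to equicontinuity at the origin, and sensitivity reduces to sensitivity at the origin. So I will show that a linear system which is not equicontinuous at the origin is sensitive at the origin. I will also check that the two properties cannot hold together, which makes the statement a genuine dichotomy.

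\emph{Not equicontinuous implies sensitive.} Negating equicontinuity at the origin gives some $\varepsilon_0>0$ with the following property: for every $\delta>0$ there are $x$ with $\|x\|<\delta$ and $n\in\mathbb{N}$ such that $\|T^n(x)\|\ge\varepsilon_0$. This is verbatim sensitivity at the origin with sensitivity constant $\varepsilon_0$. By the remark, $(X,T)$ is then sensitive with constant $\varepsilon_0$. Concretely, given $x\in X$ and $\varepsilon>0$, choose $z$ with $\|z\|<\varepsilon$ and $\|T^n z\|\ge\varepsilon_0$, and set $y=x+z$. Linearity gives $\|T^n x-T^n y\|=\|T^n z\|\ge\varepsilon_0$. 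This translation step is the only place linearity is used.

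\emph{The two properties are mutually exclusive.} Suppose $T$ is equicontinuous and also sensitive with constant $\delta$. Equicontinuity with $\varepsilon=\delta$ gives some $\eta>0$ such that $\|x-y\|<\eta$ forces $\|T^nx-T^ny\|<\delta$ for all $n$. Sensitivity, applied with $\varepsilon=\eta$, produces $y$ with $\|x-y\|<\eta$ and some $n$ with $\|T^nx-T^ny\|\ge\delta$, a contradiction.

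\emph{Main obstacle.} The only delicate point is the setting. In an $F$-space, $\|\cdot\|$ denotes an invariant F-norm rather than a homogeneous norm. The argument never uses scaling, only translation invariance $\|T^n x-T^n y\|=\|T^n(x-y)\|$, which holds for an invariant metric together with linearity of $T^n$. So the proof goes through unchanged, and no scaling argument should be introduced.
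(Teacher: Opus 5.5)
Your proof is correct and is essentially the paper's argument. The paper reduces to the origin and notes that the negation of sensitivity at $0$ is exactly equicontinuity at $0$; you run the same quantifier negation in the contrapositive direction (failure of equicontinuity at $0$ is verbatim sensitivity at $0$ with constant $\varepsilon_0$), and your translation step $y=x+z$ makes the reduction to the origin explicit, while your additional check that the two properties are mutually exclusive is not in the paper but is correct and makes the ``either/or'' a genuine dichotomy.
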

\begin{proof} We only need to check the properties at the origin.
	Suppose that a linear operator $T$ is not sensitive at $0$. Then there exists $x\in X$ such that for every $\varepsilon>0$ there exists $\eta >0 $ such that $||x||< \eta \implies ||T^n(x)||< \varepsilon$ for every $n\in \mathbb{N}.$  Hence $(X,T)$ is equicontinuous at the origin.
\end{proof}

\begin{Remark}
	We know that a transitive operator is sensitive. Hence from the above theorem, we can say that a transitive operator can never be equicontinuous.
\end{Remark} 

\bigskip
	
We now consider families:	
	\begin{Definition} \cite{akin}
		A non-empty collection $\mathcal{F}$ of subsets of $\mathbb{N}$ is called a \emph{Furstenberg family} (or simply \emph{family}) on $\mathbb{N}$ if every member $A\in \mathcal{F}$ is infinite and $\mathcal{F}$ is hereditarily upward i.e., if $A\in \mathcal{F},$ and $A\subset B$ then $B\in \mathcal{F}$.\par
		 Moreover, $\mathcal{F}$ is a \emph{filter} if for every pair of members $A,B \in \mathcal{F},$ $A\cap B \in \mathcal{F}.$ For a family $\mathcal{F}$ its \emph{dual} $k\mathcal{F}$ is defined as $k\mathcal{F}=\{A: A\cap B \neq \emptyset \ \text{for every}\ B\in \mathcal{F}\}.$ Let $\mathcal{J}$ denote the set of all infinite subsets of $\mathbb{N}$ and so $k\mathcal{J}$ is the set of all cofinite subsets of $\mathbb{N}$.
		\end{Definition}
		
		A family $\mathcal{F}$ is called \emph{shift invariant} if for every $i\in \mathbb{N}$ and $A\in \mathcal{F}$ both the sets $(A+i)\cap \mathbb{N}$ and $(A-i)\cap \mathbb{N}$ belong to $\mathcal{F}.$\par 
        Given a family $\mathcal{F}$ on $\mathbb{N}$, we consider another family $$\widetilde{\mathcal{F}}=\{A\subset \mathbb{N}: \forall\ N \in \mathbb{N}\ \exists\ B\in \mathcal{F}\ \text{such that}\ (B+[-N,N])\cap \mathbb{N} \subset A\}$$ 
		Clearly $\widetilde{\mathcal{F}} \subset \mathcal{F}$ and $\widetilde{\mathcal{F}}$ is shift invariant.\par  
		
		We say that a sequence $(x_n)$ is \emph{$\mathcal{F}$-convergent} to $x$ or $\mathcal{F}$-$\lim_n x_n =x$ if for every open set $U$ containing $x,$ the set $\{n\in \mathbb{N}: x_n\in U\}\in \mathcal{F}$ \cite{F1}.

		Recall that $(X,T)$ is called \emph{$\mathcal{F}$-transitive} if for every pair of opene sets $U,V \subset X$ the set $N(U,V)\in \mathcal{F}$ (c.f. \cite{F1}) and 
 $(X,T)$ is called \emph{hereditarily $\mathcal{F}$-transitive} if for every pair of opene sets $U,V \subset X$ and every $A\in \mathcal{F},$ $N(U,V)\cap A \in \mathcal{F}$ (c.f. \cite{bes ST})

    \begin{Lemma} [Lemma 2.3 \cite{bes ST}] \label{F WEAK} For the linear system $(X,T)$ let $\mathcal{F}$ be a family  on $\mathbb{N}$. Then $(X,T)$ is $\widetilde{\mathcal{F}}$-transitive if and only if $(X,T)$ is $\mathcal{F}$-transitive and weakly mixing.
	\end{Lemma}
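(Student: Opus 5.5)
We prove the two implications separately. Throughout we write $N(U,V)=N_T(U,V)$, and we use that $(B+[-N,N])\cap\mathbb{N}$ is the set of integers within distance $N$ of some element of $B$.

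\emph{($\Rightarrow$)} Suppose $(X,T)$ is $\widetilde{\mathcal{F}}$-transitive. Since $\widetilde{\mathcal{F}}\subset\mathcal{F}$, it is $\mathcal{F}$-transitive. For weak mixing we show that each $N(U,V)$ is thick. Fix opene $U,V$ and $k\in\mathbb{N}$. Because $N(U,V)\in\widetilde{\mathcal{F}}$, taking $N=k$ gives some $B\in\mathcal{F}$ with $(B+[-k,k])\cap\mathbb{N}\subset N(U,V)$. The set $B$ is infinite, so it has an element $b>k$. Then the whole block $\{b,b+1,\dots,b+k\}$ lies in $N(U,V)$. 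Hence $N(U,V)$ is thick, and the system is weakly mixing.

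\emph{($\Leftarrow$)} Suppose $(X,T)$ is $\mathcal{F}$-transitive and weakly mixing. Fix opene $U,V$ and $N\in\mathbb{N}$; we must find $B\in\mathcal{F}$ with $(B+[-N,N])\cap\mathbb{N}\subset N(U,V)$. The idea is to shrink $U$ and $V$ so that the smaller sets can only meet at times all of whose $N$-neighbours lie in $N(U,V)$. Since $N(U,V)\neq\emptyset$, the sets $U\cap T^{-m}(V)$ are nonempty for many $m$, and we use this in two steps.

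\begin{itemize}
\item[Step 1.] Transitivity gives $m$ with $U\cap T^{-m}(V)$ opene. Weak mixing (all $N(U',V')$ are thick) supplies a long block of such times. We then apply the filter-type property of Lemma \ref{F1}, iterated finitely many times, to the $2N+1$ pairs needed to cover the shifts $j\in[-N,N]$. This yields opene $U',V'$ with
\[
N(U',V')\subset\bigcap_{j=-N}^{N}\bigl(N(U,V)-j\bigr).
\]
\item[Step 2.] Each set $N(U,V)-j$ must be realised as a hitting set. For $j\ge 0$ we use $N(U,T^{-j}V)$: indeed $n+j\in N(U,V)$ iff $U\cap T^{-n}(T^{-j}V)\neq\emptyset$, and $T^{-j}V$ is open by continuity. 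The difficulty is that $T^{-j}V$ might be empty, since $T$ need not be surjective (Example \ref{wei-seq}). It is nonempty here because a hypercyclic operator has dense range, since its orbit is dense. For negative shifts $n-j$ we instead use $N(T^{-j}U,V)$, with the same dense-range argument showing $T^{-j}U$ is opene.
\end{itemize}

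With $U',V'$ in hand, take $B=N(U',V')$. By $\mathcal{F}$-transitivity $B\in\mathcal{F}$. By Step 1, every $b\in B$ satisfies $b+j\in N(U,V)$ for all $|j|\le N$, so $(B+[-N,N])\cap\mathbb{N}\subset N(U,V)$ (near $0$ this is automatic after intersecting with $\mathbb{N}$). Since $N$ was arbitrary, $N(U,V)\in\widetilde{\mathcal{F}}$.

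The main obstacle is Step 1: Lemma \ref{F1} has to be iterated over $2N+1$ pairs, and every shifted set $T^{-j}V$ and $T^{-j}U$ has to be nonempty. The first is a routine induction using Lemma \ref{F1} for two pairs at a time. The second is settled by the dense-range property of hypercyclic operators noted in Step 2.
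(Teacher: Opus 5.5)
The paper does not prove this lemma; it quotes it from Bès--Menet--Peris--Puig, so there is no internal proof to compare against. Your argument is correct and is the standard one. For ($\Rightarrow$), taking $N=k$ in the definition of $\widetilde{\mathcal F}$ forces a block $\{b,\dots,b+k\}\subset N(U,V)$. For ($\Leftarrow$), you apply Lemma \ref{F1} to the $2N+1$ pairs $(U,T^{-j}V)$, $0\le j\le N$, and $(T^{-i}U,V)$, $1\le i\le N$, to get $(U',V')$, and $B=N(U',V')\in\mathcal F$ then works.

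A few things to tidy up:
\begin{enumerate}
\item The opening of Step 1 (a single $m$, a long block of times) is never used and can be dropped.
\item For the negative shifts you only have the inclusion $N(T^{-i}U,V)\subset (N(U,V)+i)\cup\{1,\dots,i\}$, not an equality. Together with intersecting with $\mathbb N$, this is exactly what you need.
\item Nonemptiness of $T^{-j}V$ needs $T^j$, not just $T$, to have dense range. This holds because the tail $\{T^n x: n\ge j\}$ of a dense orbit is still dense and lies in $T^j(X)$.
\end{enumerate}

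Finally, your two directions use two formulations of weak mixing. In ($\Rightarrow$) you use thickness of every $N(U,V)$; in ($\Leftarrow$), Lemma \ref{F1} rests on transitivity of $T\oplus T$. The paper's Section 2 takes thickness as the definition, so your usage is consistent with the paper. If you start from the $T\oplus T$ definition, you should record the elementary equivalence. Suppose all hitting sets are thick, and take $k\in N(V_1,V_2)$ and $W=V_1\cap T^{-k}V_2$. Any $m$ with $m,m+k\in N(U,W)$ gives $m+k\in N(U,V_1)\cap N(U,V_2)$. Apply this with $U=U_1\cap T^{-k'}U_2$ for some $k'\in N(U_1,U_2)$ and the pair $V_1$, $T^{-k'}V_2$. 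This yields $N(U_1,V_1)\cap N(U_2,V_2)\neq\emptyset$.
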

	
\bigskip	
	
\section{On $\mathcal{F}$-Transitive Operators}

	We observe some basic properties of transitive operators:

	\begin{Lemma}\label{lemma0}
	 Let $X$ be an infinite dimensional, separable $F$-space and $T\in \mathcal{L}(X)$ be topologically transitive. Then for every $x\in X$ and $n\in \mathbb{N}$, the set $T^{-n}(x)$ has empty interior.
	\end{Lemma}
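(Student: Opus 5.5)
We argue by contradiction, using linearity to move the problem from an arbitrary fibre to the kernel. Suppose that for some $x\in X$ and $n\in\mathbb{N}$ the set $T^{-n}(x)$ contains an opene set $U$. Then $T^{-n}(x)\neq\emptyset$, so pick $u\in U$. The translate $W=U-u$ is an open neighbourhood of $0$. For every $w\in W$ we have $T^n(w)=T^n(w+u)-T^n(u)=x-x=0$. Hence $W\subset \ker T^n$.

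The next step upgrades this to $T^n\equiv 0$. The kernel $\ker T^n$ is a linear subspace containing a neighbourhood of the origin. In a topological vector space every neighbourhood $W$ of $0$ is absorbing: for each $z\in X$ the sequence $z/k\to 0$ by continuity of scalar multiplication, so $z/k\in W$ for some $k$. Linearity then gives $z\in\ker T^n$. Thus $\ker T^n=X$, that is, $T^n=0$.

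Finally this contradicts transitivity. If $T^n=0$, then $T^m=0$ for all $m\ge n$. So the orbit of any vector is the finite set $\{x_0,Tx_0,\dots,T^{n-1}x_0,0\}$. A finite set cannot be dense in an infinite dimensional (hence non-trivial, Hausdorff, without isolated points) $F$-space. This contradicts the existence of a hypercyclic vector, which follows from topological transitivity on the separable Baire space $X$. Alternatively, one can argue directly with open sets: take opene $U',V$ with $0\notin \overline{V}$. Then $N_T(U',V)\subset\{1,\dots,n-1\}$ is finite, whereas transitivity together with the absence of isolated points forces hitting time sets to be infinite. The hypercyclic-vector route is cleaner, so that is the one to use.

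There is no real obstacle here. The only point needing care is the step from ``$\ker T^n$ contains a neighbourhood of $0$'' to ``$\ker T^n=X$''. This uses only the absorbing property of neighbourhoods of $0$ in a topological vector space; no norm is required, since $X$ is just an $F$-space.
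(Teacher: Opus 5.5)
Your proof is correct, but it takes a different route from the paper's. The paper does not use linearity at all. It picks a transitive point $y$, notes that $y$ enters $U$ at some time $m$ so that $T^{m+n}(y)=x$, and splits into two cases. If $x$ is itself a transitive point, it returns to $U$ and is therefore periodic, which is impossible. If not, some opene $V$ misses $\mathcal{O}(x)$, and deleting the finitely many points $y,\dots,T^{m+n}(y)$ from $V$ gives an opene set missed by $\mathcal{O}(y)$.

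You instead use the fact that the fibres of $T^n$ are cosets of $\ker T^n$. A fibre with interior therefore makes $\ker T^n$ a subspace containing a neighbourhood of $0$, hence all of $X$ by absorbency, and $T$ is nilpotent.

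Your approach gives brevity and a sharper statement: the fibres of $T^n$ have empty interior for every operator with $T^n\neq 0$. Transitivity is then used only to exclude nilpotent operators. For that step you do not even need a hypercyclic vector: transitivity forces $T$, and hence $T^n$, to have dense range, which $T^n=0$ violates when $X\neq\{0\}$.

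The paper's argument gives generality in the other direction. It works verbatim for any continuous self-map with a dense orbit on a Hausdorff space without isolated points, in keeping with the paper's habit of transferring arguments from topological dynamics.
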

    \begin{proof}
        Suppose to the contrary that there exist some opene set $U$ and $x\in X$ such that $T^n(U)=\{x\}$ for some $n\in \mathbb{Z}_+.$ Let $y$ be a transitive point. Then there exists $m\in \mathbb{N}$ such that $T^m(y)\in U$ and so $T^{m+n}(y)=x.$\par 
        Case $(i)$: Suppose that $x$ is a transitive point. Then there exits $k\in \mathbb{N}$ such that $T^k(x)\in U$ and so $T^{k+n}(x)=x,$ which is a contradiction.\par 
        Case $(ii)$: Suppose that $x$ is not a transitive point. Then there exists an opene set $V\subset X$ such that $\mathcal{O}(x)\cap V = \emptyset.$ Take $V^{\prime}=V\setminus \{y,\dots, T^{m+n}(y)\}.$ Clearly, $V^{\prime}$ is an opene set and $\mathcal{O}(y)\cap V^{\prime}=\emptyset,$ which is again a contradiction. 
        
        This completes the proof.
    \end{proof}
	\begin{Lemma}\label{lemma1}
		Let $X$ be an infinite dimensional, separable $F$-space and $T\in \mathcal{L}(X)$. Suppose that $T$ is topologically transitive. Then there exist two transitive points $x,y$  such that $\mathcal{O}(x)\cap \mathcal{O}(y)=\emptyset.$
	\end{Lemma}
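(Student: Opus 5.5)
The plan is a Baire category argument on the dense $G_\delta$ set of transitive points. Write $HC(T)$ for the set of transitive (hypercyclic) points. It is a dense $G_\delta$ subset of $X$, and $X$ is a Baire space. Fix one transitive point $x\in HC(T)$. It suffices to show that the set
$$E_x=\{y\in X:\ \mathcal{O}(y)\cap\mathcal{O}(x)\neq\emptyset\}=\bigcup_{m,n\in\mathbb{Z}_+} T^{-m}\big(\{T^n x\}\big)$$
is of first category. Then $HC(T)\setminus E_x$ is still residual, hence non-empty, and any $y$ in it is a transitive point with $\mathcal{O}(x)\cap\mathcal{O}(y)=\emptyset$.

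To see that $E_x$ is meagre, note that it is a countable union of the sets $T^{-m}(\{T^n x\})$. Each of these is closed, being the preimage of a point under the continuous map $T^m$. By Lemma \ref{lemma0}, each has empty interior when $m\ge 1$. For $m=0$ the set is the singleton $\{T^n x\}$, which has empty interior because $X$ is an infinite dimensional $F$-space and so has no isolated points. Hence every set in the union is closed and nowhere dense, and $E_x$ is meagre.

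The step that needs care is the application of Lemma \ref{lemma0}. The lemma is stated for $n\in\mathbb{N}$, so the case $m=0$ must be handled separately, as done above. One should also check that $T^{-m}(\{T^nx\})$ is genuinely closed; this holds because points are closed in the Hausdorff space $X$ and $T^m$ is continuous. Granted these, the rest is just the Baire category theorem applied in $X$: the set $HC(T)\setminus E_x$ is residual, hence non-empty.

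An alternative route avoids Lemma \ref{lemma0} and uses linearity directly. Choose $y=\lambda x$ for a scalar $\lambda$; then $\mathcal{O}(y)=\lambda\,\mathcal{O}(x)$, and by Bourdon's observation $\lambda x$ is also transitive. The condition $\lambda T^m x=T^n x$ can hold only for countably many $\lambda$, unless $x$ is periodic-like, which is impossible for a transitive point. So some $\lambda$ avoids all of them. However, that route needs results not proved in the paper, so I would use the Baire argument above.
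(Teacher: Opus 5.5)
Your proof is correct and is the same Baire-category argument as the paper's, with Lemma \ref{lemma0} showing that the closed sets $T^{-m}(\{T^nx\})$ are nowhere dense. Your exceptional set $E_x=\bigcup_{m,n\in\mathbb{Z}_+}T^{-m}(\{T^nx\})$ is in fact the right one, whereas the paper removes only $\mathcal{O}(x)\cup\bigcup_n T^{-n}(x)$ and then infers $y\in T^{-(i-j)}(x)$ from $T^iy=T^jx$; that inference needs $T$ to be injective, so your version closes a gap in the paper's proof.
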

\begin{proof}
	Let $x$ be a transitive point for $(X,T)$ and $n\in \mathbb{Z}_+.$ Clearly $X\setminus \{T^{n}(x)\}$ is an open and dense subset of $X.$ On the other hand $X\setminus T^{-n}(x)$ is also open in $X.$ Now we claim that $X\setminus T^{-n}(x)$ is dense in $X.$
	 Suppose to the contrary that $X\setminus T^{-n}(x)$ is not dense in $X.$ Then there exists an opene set $U\subset X$ such that $(X\setminus T^{-n}(x))\cap U =\emptyset,$ which implies that $T^n(U)=x,$ which gives a contradiction from Lemma \ref{lemma0}. 
    Then from Baire Category theorem we can conclude that there is a transitive point $y\in X$ such that $y\notin \mathcal{O}(x) \bigcup [ \displaystyle{\bigcup_{n\in \mathbb{N}}}T^{-n}(x)].$ \par 
	 Now if $\mathcal{O}(x)\cap \mathcal{O}(y)\neq\emptyset$, then there exists $i, j\in \mathbb{N}$ such that $T^i(y)=T^j(x) \Rightarrow y\in T^{j-i}(x),$ which is a contradiction. This completes the proof. 
\end{proof}

We now define a general form of Hypercyclicity Criterion using a family $\mathcal{F}$ on $\mathbb{N}$ to characterize $\mathcal{F}$-transitivity properties for linear operators. 
\begin{Definition}\label{def1}
	Let $X$ be  an infinite dimensional, separable $F$-space, $T\in \mathcal{L}(X)$  and $\mathcal{F}$ be a family on $\mathbb{N}.$ We say that $T$ satisfies \emph{$\mathcal{F}$-Transitivity Criterion} if there exist two dense subsets $D_1,$ $D_2$ of $X$ and two sequence of mappings $I_n:D_1\to X,$ $S_n:D_2\to X$ such that\par 
	$(i)$ $\mathcal{F}$-$\lim_n (I_n(x), T^n(I_n(x)))=(x,0)$ for every $x\in D_1$\par 
	$(ii)$ $\mathcal{F}$-$\lim_n (S_n(y), T^n(S_n(y)))=(0,y)$ for every $y\in D_2$.
\end{Definition}

\begin{Theorem}\label{hyper}
	Let $X$ be  an infinite dimensional, separable $F$-space, $T\in \mathcal{L}(X)$  and $\mathcal{F}$ be a family on $\mathbb{N}.$ Then there exists some filter $\mathcal{F}^{\prime}\subset \mathcal{F}$ so that the following conditions are equivalent:
	\begin{enumerate}
		\item $(X,T)$ is $\widetilde{\mathcal{F}}$-transitive.\par 
		\item $T$ satisfies $\mathcal{F}^{\prime}$-Transitivity Criterion. \par 
		\item $(X,T)$ is hereditarily $\mathcal{F}^{\prime}$-transitive.
	\end{enumerate}
\end{Theorem}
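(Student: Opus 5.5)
The natural choice is to let $\mathcal{F}'$ be the filter generated by the return-time sets themselves:
$$\mathcal{F}'=\{A\subset\mathbb{N}: A\supset N(U_1,V_1)\cap\cdots\cap N(U_k,V_k)\ \text{for some opene } U_i,V_i\subset X\}.$$
This is well defined as a family only if every finite intersection $N(U_1,V_1)\cap\dots\cap N(U_k,V_k)$ is infinite. Under (1), Lemma \ref{F WEAK} shows that $T$ is weakly mixing and $\mathcal{F}$-transitive. Iterating Lemma \ref{F1} gives opene $U',V'$ with $N(U',V')\subset\bigcap_i N(U_i,V_i)$, and $N(U',V')\in\widetilde{\mathcal{F}}\subset\mathcal{F}$. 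Hence $\mathcal{F}'$ is a filter contained in $\mathcal{F}$.

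There is a subtlety: the statement asks for a single $\mathcal{F}'$, but this construction depends on $T$. So I will read "there exists some filter $\mathcal{F}'\subset\mathcal{F}$" as: for this $T$, with $\mathcal{F}'$ defined as above. The proof of (2)$\Rightarrow$(1) should then only use that $\mathcal{F}'\subset\mathcal{F}$ is a filter. For the reverse implications I will use this specific $\mathcal{F}'$. I expect this interpretive point to be the main obstacle; in particular, if (1) fails, one must still check that (2) and (3) fail for the chosen $\mathcal{F}'$.

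\textbf{(1)$\Rightarrow$(3).} Fix $A\in\mathcal{F}'$ and opene $U,V$. Then $N(U,V)\cap A$ contains $N(U,V)\cap\bigcap_i N(U_i,V_i)$, which lies in $\mathcal{F}'$ by definition. So $N(U,V)\cap A\in\mathcal{F}'$.

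\textbf{(3)$\Rightarrow$(2).} Take $D_1=D_2=X$. Fix $x$ and a decreasing neighbourhood base $W_j$ of $0$. For each $j$, hereditary $\mathcal{F}'$-transitivity gives $N(x+W_j,W_j)\in\mathcal{F}'$. For $n\in N(x+W_j,W_j)\setminus N(x+W_{j+1},W_{j+1})$, choose $I_n(x)\in (x+W_j)\cap T^{-n}W_j$; off all of these sets define $I_n(x)$ arbitrarily. The set of $n$ with $(I_n(x),T^nI_n(x))\in (x+W_j)\times W_j$ then contains $N(x+W_j,W_j)\in\mathcal{F}'$, which gives $\mathcal{F}'$-convergence to $(x,0)$. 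The symmetric choice with $N(W_j,y+W_j)$ gives $S_n$.

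A technical point arises at $n\in\bigcap_j N(x+W_j,W_j)$. There any choice in the smallest relevant neighbourhood works, or one takes $j$ as large as possible subject to $j\le n$. Truncating $j\le n$ keeps the construction well defined.

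\textbf{(2)$\Rightarrow$(1).} Given opene $U_1,V_1,U_2,V_2$, pick $x_i\in D_1\cap U_i$ and $y_i\in D_2\cap V_i$, and set $z_n^i=I_n(x_i)+S_n(y_i)$. Using linearity and continuity of addition, the set of $n$ with $z^i_n\in U_i$ and $T^nz^i_n\in V_i$ contains the intersection of four sets from $\mathcal{F}'$. Since $\mathcal{F}'$ is a filter, this intersection lies in $\mathcal{F}'$. Therefore $N(U_1,V_1)\cap N(U_2,V_2)\in\mathcal{F}'\subset\mathcal{F}$, and in particular it is nonempty. This makes $T\oplus T$ transitive, so $T$ is weakly mixing, and $T$ is $\mathcal{F}$-transitive. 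Lemma \ref{F WEAK} then yields $\widetilde{\mathcal{F}}$-transitivity.
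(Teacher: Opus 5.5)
Your argument is correct, but it runs in the opposite direction from the paper's. The paper proves (1)$\Rightarrow$(2)$\Rightarrow$(3)$\Rightarrow$(1). It fixes two transitive points $x,y$ with disjoint orbits (Lemma \ref{lemma1}, which rests on Lemma \ref{lemma0}) and takes decreasing neighbourhood bases $U_k,V_k,W_k$ at $x$, $y$ and $0$. It then lets $\mathcal{F}'$ be the filter with the countable decreasing base $F_k=N(U_k,W_k)\cap N(W_k,V_k)$. Using $\bigcap_k F_k=\emptyset$ (this is where $y\neq 0$ is used), it splits the $F_k$ into the disjoint pieces $E_k=F_k\setminus F_{k+1}$. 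These define $I_n,S_n$ on the orbits $D_1=\mathcal{O}(x)$, $D_2=\mathcal{O}(y)$. Only afterwards does it obtain hereditary $\mathcal{F}'$-transitivity from the criterion, via the filter property.

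You instead take $\mathcal{F}'$ to be the filter generated by all hitting-time sets. Then being a filter inside $\mathcal{F}$ is exactly Lemma \ref{F1} plus Lemma \ref{F WEAK}, and hereditary transitivity is immediate. You build the criterion with $D_1=D_2=X$ directly from the sets $N(x+W_j,W_j)$ and $N(W_j,y+W_j)$. This is more elementary: it needs no transitive points and no Lemmas \ref{lemma0} and \ref{lemma1}, and it gives the criterion on all of $X$. The paper's route gains an explicit countable decreasing base for $\mathcal{F}'$. In fact the two filters coincide. The $F_k$ are intersections of hitting-time sets, and the paper's (2)$\Rightarrow$(3) shows every $N(U,V)$ lies in its filter.

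Two loose ends should be tied.

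First, the exceptional set in your (3)$\Rightarrow$(2) is explicit. By continuity of $T^n$, $\bigcap_j N(x+W_j,W_j)=\{n: T^nx=0\}$, so you can simply put $I_n(x)=x$ there. Similarly, $\bigcap_j N(W_j,y+W_j)$ is empty unless $y=0$, in which case $S_n(0)=0$ works. Your truncation $j\le n$ only controls the good set up to finitely many $n$, so it would also need $\mathcal{F}'$ to contain all cofinite sets. That holds for your filter under (1): $N(W,V)$ misses $\{1,\dots,m\}$ when $V$ is a small neighbourhood of some $y\neq 0$ and $W$ a small neighbourhood of $0$. It does not hold for an arbitrary filter.

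Second, when (1) fails you must not use the generated filter. If $T$ is not transitive, that filter contains $\emptyset$, and then (3) holds trivially. Instead take any filter $\mathcal{F}'\subset\mathcal{F}$, e.g.\ the supersets of a fixed $A\in\mathcal{F}$. Your (2)$\Rightarrow$(1) uses only that $\mathcal{F}'\subset\mathcal{F}$ is a filter, and (3)$\Rightarrow$(2) with the fix above uses no more. So (2) and (3) both fail in this case, as the statement requires. The paper leaves this case implicit in the same way.
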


\begin{proof}
	$(1)\Rightarrow (2)$ Suppose that $(X,T)$ is $\widetilde{\mathcal{F}}$-transitive. Then from Lemma \ref{F WEAK}, $T$ is weakly mixing and $\mathcal{F}$-transitive. Consequently, from Lemma \ref{lemma1}, there exist two transitive points $x,y\in X$ such that $\mathcal{O}(x)\cap \mathcal{O}(y)= \emptyset.$ Let $(U_k), (V_k)$ and $(W_k)$ be the countable decreasing neighbourhood base at $x, y$ and $0$ respectively. For every $k\in \mathbb{N},$ we take \par 
	\centerline
	{$F_k=N(U_k, W_k)\cap N(W_k,V_k).$ } 

Since $T$ is $\widetilde{\mathcal{F}}$-transitive, so by using Lemma \ref{F WEAK} and \ref{F1}, we can conclude that $F_k\in \mathcal{F}$. Moreover, the subfamily $\mathcal{F}^{\prime}=\{F\subset \mathbb{N}: F_k\subset F \ \text{for some}\ k\in \mathbb{N}\}$ forms a filter as $F_{k+1}\subset F_k$ for every $k\in \mathbb{N}.$ \par
Also we  observe that $\displaystyle{\bigcap_{k=1}^{\infty}}F_k = \emptyset.$ If there is some $m\in \mathbb{N}$ with $m\in \displaystyle{\bigcap_{k=1}^{\infty}}F_k,$ then for every $k\in \mathbb{N},$ there exists $z_k\in W_k$ such that $T^m(z_k)\in V_k.$ Then we get a sequence $(z_k)\to 0$ but $T^m(z_k)\to y \neq 0,$ which contradicts continuity of $T^m.$\par 
\vspace{.1cm}
Let  $E_k=F_k\setminus F_{k+1}.$ Then clearly $E_i \cap E_j = \emptyset$ when $i\neq j$. We claim that for infinitely many $k,$ $E_k \neq \emptyset.$ On the contrary suppose that only finitely many $E_k$'s are non-empty. Then there exists some $k_0\in \mathbb{N}$ such that $E_i=\emptyset$ for all $i\geq k_0 \Rightarrow F_i=F_{k_0}$ for all $i\geq k_0.$ But this contradicts the fact that $\displaystyle{\bigcap_{k=1}^{\infty}}F_k = \emptyset.$ Also for every $k,$ $F_k=\displaystyle{\bigcup_{i\geq k}}E_i$. Indeed $\displaystyle{\bigcup_{i\geq k}}E_i \subset F_k$ and  let $m\in F_k.$ Since $\displaystyle{\bigcap_{k=1}^{\infty}}F_k = \emptyset$, there exists some $i\geq k$ such that $m\in F_i$ but $m\notin F_{i+1},$ which implies $m\in E_i.$ So without any loss of generality we can assume that: \par 
\vspace{.15cm}
$(i)$ For every $k\in \mathbb{N},$ $E_k \neq \emptyset.$\par 
$(ii)$ $E_i \cap E_j = \emptyset$ when $i\neq j.$\par 
$(iii)$ For every $k\in \mathbb{N}, F_k=\displaystyle{\bigcup_{i\geq k}}E_i.$ \par 
Let $k\in \mathbb{N}.$ Then for every $m\in E_k$, there exist some $u_{(k,m)}\in U_k,$ $w_{(k,m)}\in W_k$ such that $T^m(u_{(k,m)})\in W_k$ and $T^m(w_{(k,m)})\in V_k$. Let  $D_1=\{T^jx:j\geq 0\}$ and $D_2=\{T^jy:j\geq 0\}$. Also, define two sequences of maps $I_n:D_1\to X$ and $S_n:D_2\to X$ in such a way that for every $j\in \mathbb{N}$
\begin{center}
	
	$I_{n}(T^jx)=
	\begin{cases}
		T^ju_{(k,n)}& \text{when}\ n\in E_k\  \text{for some}\  k\\ 
		0& \text{when}\ n\notin \displaystyle{\bigcup_{k\in \mathbb{N}}}E_k
	\end{cases}$
\end{center}

\begin{center}
		$S_{n}(T^jy)=
	\begin{cases}
		T^jw_{(k,n)}& \text{when}\ n\in E_k\  \text{for some}\  k\\ 
		0& \text{when}\ n\notin \displaystyle{\bigcup_{k\in \mathbb{N}}}E_k
	\end{cases}$
\end{center}
Now we  show that for every $j\in \mathbb{N},$ $\mathcal{F}^{\prime}-\displaystyle{\lim_n}(I_n(T^jx),T^n(I_n(T^jx)))=(T^jx, 0).$ 

Let $U$ and $W$ be two open sets containing $T^jx$ and $0$ respectively. Then there exists $k\in \mathbb{N},$ such that $x\in U_k \subset T^{-j}(U)$ and $0\in W_k \subset T^{-j}(W).$ Now for every $m\in F_k$ there is some $i\geq k$ with $m\in E_i.$ Then we have
\begin{center}
	$u_{(i,m)}\in U_i\subset U_k\subset T^{-j}(U)$ and $T^m(u_{(i,m)})\in W_i\subset W_k\subset T^{-j}(W)$\\
	$\Longrightarrow T^j(u_{(i,m)})\in U$ and $T^j(T^m(u_{(i,m)}))\in W$\\
	$\Longrightarrow I_m(T^jx)\in U$ and $T^m(I_m(T^jx))\in W$
	\end{center}
	Hence $F_k\subset \{n\in \mathbb{N}: (I_n(T^jx),T^n(I_n(T^jx))) \in U \times W\}\in \mathcal{F}^{\prime}.$ Consequently, $\mathcal{F}^{\prime}-\displaystyle{\lim_n}(I_n(T^jx),T^n(I_n(T^jx)))=(T^jx, 0).$ Similarly for every $j\in \mathbb{N},$ one can prove that $\mathcal{F}^{\prime}-\displaystyle{\lim_n}(S_n(T^jy),T^n(S_n(T^jy)))=(0,T^jy).$ 
	
	Thus $T$ satisfies $\mathcal{F}^{\prime}$-Transitivity Criterion.\par 
	$(2)\Rightarrow (3)$ Let $U$ and $V$ be two opene subsets of $X$. We  fix  two other opene subsets $U^{\prime}, V^{\prime}$ and a neighbourhood $W$ of $0$ such that $U^{\prime}+W \subset U$ and $V^{\prime}+W \subset V.$ Choose $x\in U^{\prime}\cap D_1$ and $y\in V^{\prime}\cap D_2.$ Then from the conditions of $\mathcal{F}^{\prime}$-Transitivity Criterion, we get $F\in \mathcal{F}^{\prime}$ such that for every $n\in F$ $I_n(x)+S_n(y)\in U^{\prime}+W$ and $T^n(I_n(x)+S_n(y))\in W+V^{\prime}.$ This implies that $F\subset N(U^{\prime}+W, V^{\prime}+W)$ and so $N(U,V)\in \mathcal{F}^{\prime}.$ Now since $\mathcal{F}^{\prime}$ is filter, it is clear that $T$ is hereditarily $\mathcal{F}^{\prime}$-transitive. \par 
	$(3)\Rightarrow (1)$ Let $U_1,V_1,U_2,V_2$ be opene subsets of $X.$ Clearly $N(U_1,V_1)\cap N(U_2,V_2)\in \mathcal{F}.$ So $T$ is weakly mixing and $\mathcal{F}$-transitive. Hence $T$ is $\widetilde{\mathcal{F}}$-transitive. 
\end{proof}
\begin{Corollary}\label{hyper1}
	Let $X$ be an infinite dimensional, separable $F$-space and $T\in \mathcal{L}(X).$ Also, let $\mathcal{F}$ be a filter on $\mathbb{N}.$ Then the following conditions are equivalent.
	\begin{enumerate}
		\item $(X,T)$ is $\mathcal{F}$-transitive.\par
		\item $T$ satisfies $\mathcal{F}$-Transitivity Criterion.\par
		\item $(X,T)$ is hereditarily $\mathcal{F}$-transitive.
	\end{enumerate}
\end{Corollary}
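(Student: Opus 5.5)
The plan is to prove the easy implications directly from the filter property and to reduce the remaining one, $(1)\Rightarrow(2)$, to Theorem \ref{hyper}.

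\textbf{$(1)\Leftrightarrow(3)$.} Suppose $(X,T)$ is $\mathcal{F}$-transitive. Given opene $U,V$ and $A\in\mathcal{F}$, we have $N(U,V)\in\mathcal{F}$. Since $\mathcal{F}$ is a filter, $N(U,V)\cap A\in\mathcal{F}$, so $(X,T)$ is hereditarily $\mathcal{F}$-transitive. Conversely, $\mathcal{F}$ is nonempty and hereditarily upward, so $\mathbb{N}\in\mathcal{F}$. Taking $A=\mathbb{N}$ in the definition of hereditary $\mathcal{F}$-transitivity returns $\mathcal{F}$-transitivity.

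\textbf{$(2)\Rightarrow(1)$.} I would repeat the argument of $(2)\Rightarrow(3)$ in Theorem \ref{hyper}, with $\mathcal{F}$ in place of $\mathcal{F}'$. Fix opene $U,V$. Choose opene $U',V'$ and a neighbourhood $W$ of $0$ with $U'+W\subset U$ and $V'+W\subset V$. Pick $x\in U'\cap D_1$ and $y\in V'\cap D_2$.
\begin{itemize}
\item Condition (i) of the criterion gives a set in $\mathcal{F}$ on which $I_n(x)\in U'$ and $T^nI_n(x)\in W$.
\item Condition (ii) gives a set in $\mathcal{F}$ on which $S_n(y)\in W$ and $T^nS_n(y)\in V'$.
\end{itemize}
The intersection $F$ of these two sets lies in $\mathcal{F}$ because $\mathcal{F}$ is a filter. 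For every $n\in F$, the vector $I_n(x)+S_n(y)$ lies in $U$ and its $T^n$-image lies in $V$. Hence $F\subset N(U,V)$, and upward heredity gives $N(U,V)\in\mathcal{F}$.

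\textbf{$(1)\Rightarrow(2)$.} This is the only step with content. I would first show that an $\mathcal{F}$-transitive $T$ with $\mathcal{F}$ a filter is automatically weakly mixing. For opene $U_1,V_1,U_2,V_2$, both $N(U_1,V_1)$ and $N(U_2,V_2)$ lie in $\mathcal{F}$, hence so does their intersection. Members of a family are infinite, so in particular $N(U_1,V_1)\cap N(U_2,V_2)\neq\emptyset$. This says exactly that $N_{T\times T}(U_1\times U_2,\,V_1\times V_2)\neq\emptyset$ for all basic opene sets, i.e.\ $T\times T$ is transitive, so $T$ is weakly mixing.

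Lemma \ref{F WEAK} then gives that $(X,T)$ is $\widetilde{\mathcal{F}}$-transitive. Theorem \ref{hyper} now yields a filter $\mathcal{F}'\subset\mathcal{F}$ such that $T$ satisfies the $\mathcal{F}'$-Transitivity Criterion. Because $\mathcal{F}'\subset\mathcal{F}$, every $\mathcal{F}'$-limit is also an $\mathcal{F}$-limit: the relevant hitting sets contain a member of $\mathcal{F}'$, hence of $\mathcal{F}$, and $\mathcal{F}$ is upward closed. So the same $D_1,D_2,I_n,S_n$ witness the $\mathcal{F}$-Transitivity Criterion.

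The main obstacle is the weak-mixing step, since Theorem \ref{hyper} needs $\widetilde{\mathcal{F}}$-transitivity rather than $\mathcal{F}$-transitivity. The filter property is exactly what bridges that gap.
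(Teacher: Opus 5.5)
Your proof is correct and follows the route the paper intends. The paper states this corollary without a separate proof, as a consequence of Theorem~\ref{hyper}: the filter property makes $\mathcal{F}$-transitivity imply weak mixing, Lemma~\ref{F WEAK} upgrades this to $\widetilde{\mathcal{F}}$-transitivity, and Theorem~\ref{hyper} then gives an $\mathcal{F}'$-Transitivity Criterion with $\mathcal{F}'\subset\mathcal{F}$, which is therefore an $\mathcal{F}$-Transitivity Criterion. The remaining implications are handled exactly as you do, by rerunning the $(2)\Rightarrow(3)$ argument with $\mathcal{F}$ itself and using $\mathbb{N}\in\mathcal{F}$.
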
 
\begin{Corollary}
Let $X$ be an infinite dimensional, separable $F$-space and $T\in \mathcal{L}(X).$ Then the following conditions are equivalent.\par 
\begin{enumerate}
    \item $(X,T)$ is weakly mixing.
    \item There exists some sequence $(n_k)$ of positive integers such that $T$ satisfies Hypercyclicity Criterion with respect to $(n_k).$
    \item $T$ satisfies $\mathcal{F}$-Transitivity Criterion, where the filter $\mathcal{F}=\{A:\ \exists \ k_0\in \mathbb{N}\  \text{such that}\  \{n_k:k\geq k_0\}\subset A\}$. 
   
\end{enumerate}  
Hence for this filter $\mathcal{F},$ $\mathcal{F}$-Transitivity Criterion is equivalent to Hypercyclicity Criterion.
\end{Corollary}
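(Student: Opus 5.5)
We establish the cycle $(1)\Rightarrow(2)\Rightarrow(3)\Rightarrow(1)$, using the cited equivalence [Theorem 2.3 \cite{Bes2}] and Theorem \ref{hyper} (via Corollary \ref{hyper1}).

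For $(1)\Rightarrow(2)$ there is nothing new to prove. Weak mixing is equivalent to the Hypercyclicity Criterion by [Theorem 2.3 \cite{Bes2}]. This supplies an increasing sequence $(n_k)$, dense sets $D_1,D_2$ and maps $S_{n_k}$ as in Definition \ref{HC}.

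For $(2)\Rightarrow(3)$, take the filter $\mathcal{F}$ generated by the tails $\{n_k:k\ge k_0\}$, which is a filter because the tails are nested and infinite. Define $I_n(x)=x$ for $x\in D_1$ and all $n$. Set $S'_n=S_{n_k}$ when $n=n_k$ and $S'_n=0$ otherwise. Now check condition $(i)$ of Definition \ref{def1}. For a neighbourhood $U\times W$ of $(x,0)$, we have $T^{n_k}x\in W$ for all large $k$. The set of $n$ with $(x,T^nx)\in U\times W$ therefore contains a tail of $(n_k)$, so it lies in $\mathcal{F}$. Condition $(ii)$ follows the same way: $S_{n_k}y\to 0$ by (ii) of Definition \ref{HC}, and $T^{n_k}S_{n_k}y\to y$ by (iii) of Definition \ref{HC}. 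Values of $S'_n$ off the sequence are irrelevant, because $\mathcal{F}$-convergence only looks at sets containing tails of $(n_k)$.

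For $(3)\Rightarrow(1)$, the cleanest route is Corollary \ref{hyper1}. Since $\mathcal{F}$ is a filter, the $\mathcal{F}$-Transitivity Criterion gives that $(X,T)$ is hereditarily $\mathcal{F}$-transitive. Hence for opene $U_1,V_1,U_2,V_2$ the sets $N(U_1,V_1)$ and $N(U_2,V_2)$ lie in $\mathcal{F}$, and so does their intersection. In particular $N(U_1,V_1)\cap N(U_2,V_2)$ contains a tail of $(n_k)$ and is nonempty, which makes $T\oplus T$ topologically transitive, i.e.\ $T$ is weakly mixing. Alternatively, one can read off the Hypercyclicity Criterion directly from $(3)$: restrict $S_n$ to $n=n_k$ and take $D_1$ as given. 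The only point to check is that $\mathcal{F}$-$\lim (I_n x,T^nI_nx)=(x,0)$ yields a vector near $x$ whose $T^{n_k}$-image is small, not $T^{n_k}x\to0$ itself. For that reason the route through weak mixing is preferable.

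The main obstacle is mild: in the direction $(2)\Rightarrow(3)$, the criterion must be stated for the specific sequence $(n_k)$ produced in $(2)$. So $\mathcal{F}$ depends on $(n_k)$, which is how the statement should be read. The final sentence then follows because, for this filter, $(2)$ and $(3)$ are both equivalent to $(1)$.
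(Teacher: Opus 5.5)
Your proof is correct and is the derivation the paper leaves implicit, since it states this corollary without proof: B\`es--Peris for $(1)\Rightarrow(2)$; the observation that the Hypercyclicity Criterion along $(n_k)$ is the $\mathcal{F}$-Kitai's Criterion for the tail filter, which gives the $\mathcal{F}$-Transitivity Criterion with $I_n=\mathrm{id}$; and Corollary \ref{hyper1} with the filter property for $(3)\Rightarrow(1)$. You were right not to read off the Hypercyclicity Criterion along the same $(n_k)$ from $(3)$: for $(n_k)=(n)$ that would make every mixing operator satisfy Kitai's Criterion, contradicting Grivaux's example, so the existential reading of $(3)$ you adopt is the correct one.
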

Now from Theorem \ref{hyper}, we can easily characterize mixing operators by using $k\mathcal{J}$-Transitivity Criterion.
\begin{Corollary}\label{mixing}
	Let $X$ be an infinite dimensional, separable $F$-space and $T\in \mathcal{L}(X).$ Then $(X,T)$ is mixing if and only if there exist two dense subsets $D_1,$ $D_2$ of $X$ and two sequence of mappings $I_n:D_1\to X,$ $S_n:D_2\to X$ such that the following conditions hold:\par 
	$(i)$ $\lim_n(I_n(x), T^n(I_n(x)))=(x,0)$ for every $x\in D_1$\par 
	$(ii)$ $\lim_n (S_n(y), T^n(S_n(y)))=(0,y)$ for every $y\in D_2$.
\end{Corollary}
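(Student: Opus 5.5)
We derive this from Corollary \ref{hyper1} applied to the filter $k\mathcal{J}$ of cofinite subsets of $\mathbb{N}$. First, $k\mathcal{J}$ is a filter: a finite intersection of cofinite sets is cofinite, every cofinite set is infinite, and supersets of cofinite sets are cofinite. Second, $(X,T)$ is mixing exactly when $N(U,V)$ is cofinite for every pair of opene $U,V$, that is, exactly when $(X,T)$ is $k\mathcal{J}$-transitive. So Corollary \ref{hyper1}, $(1)\Leftrightarrow(2)$, shows that $T$ is mixing if and only if $T$ satisfies the $k\mathcal{J}$-Transitivity Criterion.

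It then remains to unwind the definition. For a sequence $(z_n)$ in $X\times X$, the statement $k\mathcal{J}$-$\lim_n z_n=z$ means that for every open $O\ni z$ the set $\{n: z_n\in O\}$ is cofinite. This says that $z_n\in O$ for all sufficiently large $n$, which is ordinary convergence $z_n\to z$. With $z_n=(I_n(x),T^n(I_n(x)))$ and $z_n=(S_n(y),T^n(S_n(y)))$, conditions $(i)$ and $(ii)$ of Definition \ref{def1} for $\mathcal{F}=k\mathcal{J}$ become exactly conditions $(i)$ and $(ii)$ of the corollary.

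If one does not want to rely on Corollary \ref{hyper1} as stated, one can instead invoke Theorem \ref{hyper} with $\mathcal{F}=k\mathcal{J}$. This requires two checks.

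\emph{First}, $\widetilde{k\mathcal{J}}=k\mathcal{J}$. Given a cofinite $B$, the set $(B+[-N,N])\cap\mathbb{N}$ contains $B$, so it is cofinite. Conversely, if $A\in\widetilde{k\mathcal{J}}$, take $N=0$ to get a cofinite $B\subset A$, so $A$ is cofinite.

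\emph{Second}, the filter $\mathcal{F}'$ built in the proof of $(1)\Rightarrow(2)$ is contained in $k\mathcal{J}$. This holds because each $F_k=N(U_k,W_k)\cap N(W_k,V_k)$ is an intersection of two cofinite sets when $T$ is mixing.

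The only delicate point in this alternative route is the direction (2)$\Rightarrow$(1). Conditions $(i)$ and $(ii)$ give the $k\mathcal{J}$-Transitivity Criterion directly, and the proof of $(2)\Rightarrow(3)$ in Theorem \ref{hyper} works verbatim for any filter. Running it with $k\mathcal{J}$ shows that $N(U,V)\supset F$ for some cofinite $F$. This is the step to write out carefully: pick $U'+W\subset U$ and $V'+W\subset V$, choose $x\in U'\cap D_1$ and $y\in V'\cap D_2$, and use $I_n(x)+S_n(y)$. For all large $n$, this vector lies in $U$ and its image under $T^n$ lies in $V$. Hence $T$ is mixing.
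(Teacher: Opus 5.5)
Your proposal is correct and follows the paper's own argument: specialize Corollary \ref{hyper1} (the filter case of Theorem \ref{hyper}) to $\mathcal{F}=k\mathcal{J}$, observe that mixing is exactly $k\mathcal{J}$-transitivity, and observe that $k\mathcal{J}$-convergence is ordinary convergence. There is one small slip in your optional second route. To show that a cofinite set $A\supset[M,\infty)$ belongs to $\widetilde{k\mathcal{J}}$, you must exhibit a cofinite $B$ with $(B+[-N,N])\cap\mathbb{N}\subset A$, for instance $B=[M+N,\infty)\cap\mathbb{N}$; it is not enough to note that $(B+[-N,N])\cap\mathbb{N}$ is cofinite.
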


\begin{Remark}\label{kernel}
    From the above Corollary one can see that if $T$ has dense generalized Kernel (i.e., the set $\displaystyle{\bigcup_{n\geq 1}}Ker(T^n)$ is dense in $X$), then $(X,T)$ is mixing if and only if $T$ satisfies Kitai's Criterion. Hence a weighted backward shift operator is mixing if and only if it satisfies Kitai's Criterion.
\end{Remark}
In [ Theorem 2.4 \cite{bes ST}], $\mathcal{F}$-Transitivity Criterion is defined in the following form:
		Let $\mathcal{F}$ be a family on $\mathbb{N}.$ Then a bounded linear operator $T$ defined on an infinite dimensional, separable $F$-space $X$ satisfies $\mathcal{F}$-Transitivity Criterion if there exist two dense subsets $D_1,$ $D_2$ of $X$ and a sequence of mapping $S_n:D_2\to X$ such that\par 
		$(i)$ $\mathcal{F}$-$\lim_n T^n(x)=0$ for every $x\in D_1$\par 
		$(ii)$ $\mathcal{F}$-$\lim_n (S_n(y), T^n(S_n(y)))=(0,y)$ for every $y\in D_2$.

It is clear that for any family $\mathcal{F},$ $\mathcal{F}$-Transitivity Criterion defined in \cite{bes ST} implies $\mathcal{F}$-Transitivity Criterion of Definition \ref{def1}. \par 
On the other hand, in the Definition of \cite{bes ST}, if we take $\mathcal{F} =k\mathcal{J}$, then it is equivalent to Kitai's Criterion. From \cite{mixing example} it is clear that mixing condition is not equivalent to Kitai's Criterion. But from Corollary \ref{mixing}, we can say that for $\mathcal{F} =k\mathcal{J}$ our definition is equivalent to mixing. Hence our definition of $\mathcal{F}$-Transitivity Criterion is strictly weaker than the Criterion mentioned in \cite{bes ST}. In this article we call the $\mathcal{F}$-Transitivity Criterion of \cite{bes ST} as $\mathcal{F}$-Kitai's Criterion.

\bigskip
In \cite{prox1,prox2} proximal and asymptotic relations are studied  using Furstenberg families. We  define these  relations in linear structure and examine their connections with $\mathcal{F}$-tansitive operators. 

\begin{Definition}
    Let $X$ be an infinite dimensional, separable $F$-space and $T\in \mathcal{L}(X).$ Also, let $S$ be an infinite subset of $\mathbb{N}$ and $\mathcal{F}$ be a family on $\mathbb{N}$. Then a pair $(x,y)\in X\times X$ is called 
    \begin{enumerate}
       \item
        \emph{$S$-asymptotic} if $\displaystyle{\lim_{n\in S, n\to \infty}} ||T^n(x)-T^n(y)||=0.$ 
        \item \emph{$\mathcal{F}$-proximal} if for every $\varepsilon>0$ the set $\{n\in \mathbb{N}: ||T^n(x)-T^n(y)||<\varepsilon\} \in \mathcal{F}.$  
    \end{enumerate}

    The set of all $S$-asymptotic pairs $$A_S=\{(x,y):(x,y) \ \text{is $S$-asymptotic}\}$$ forms a equivalence relation on $X\times X$ and it is called \emph{$S$-asymptotic relation}. Moreover, for every $x\in X,$ the set $A_S(x)=\{y:(x,y) \ \text{is $S$-asymptotic}\}$ is called \emph{$S$-asymptotic cell of $x.$} 

    The set of all $\mathcal{F}$-proximal pairs is  $$P_{\mathcal{F}}=\{(x,y):(x,y) \ \text{is $\mathcal{F}$-proximal}\}$$ forms a reflexive, symmetric relation on $X \times X$ and for every $x\in X,$ the set $P_{\mathcal{F}}(x)=\{y:(x,y) \ \text{is $\mathcal{F}$-proximal}\}$ is called \emph{$\mathcal{F}$-proximal cell} of $x.$ 
        \end{Definition}
   Note that  when $S=\mathbb{N},$ then $(x,y)$ is a asymptotic pair and when $\mathcal{F}= \mathcal{J},$ then $(x,y)$ is a proximal pair. It is clear that a pair $(x,y)$ is asymptotic if and only if it is $k\mathcal{J}$-proximal. 

    \begin{Theorem}
     Let $X$ be an infinite dimensional, separable $F$-space and $T\in \mathcal{L}(X).$ Suppose that $(X,T)$ is transitive. Then there exists an infinite set $S\subset \mathbb{N}$ such that for every $x\in X$ the set $A_S(x)$ is dense in $X.$
    \end{Theorem}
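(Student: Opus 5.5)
The plan is to use linearity to reduce the statement to the single cell $A_S(0)$, and then to build $S$ from the return times of one hypercyclic orbit to the origin. By linearity, $T^n(x)-T^n(y)=T^n(x-y)$, so $(x,y)$ is $S$-asymptotic exactly when $x-y\in A_S(0)=\{z\in X:\lim_{n\in S,\,n\to\infty}T^n(z)=0\}$. Hence $A_S(x)=x+A_S(0)$ for every $x\in X$. Since translation is a homeomorphism of $X$, it suffices to find one infinite $S\subset\mathbb{N}$ for which $A_S(0)$ is dense in $X$. The claim for all $x$ then follows at once.

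\emph{Construction of $S$.} Since $(X,T)$ is transitive, there is a transitive (hypercyclic) point $x_0$, so $\mathcal{O}(x_0)$ is dense. Let $(W_k)$ be a decreasing countable neighbourhood base at $0$. I will show that for every opene $V$ the hitting set $N_T(x_0,V)$ is infinite. If it were finite, say $N_T(x_0,V)\subset\{1,\dots,m\}$, then the set $V\setminus\{T(x_0),\dots,T^m(x_0)\}$ is still opene, because $X$ is a Hausdorff space without isolated points. This opene set would miss the orbit of $x_0$, contradicting density; this is the same device as in Case $(ii)$ of Lemma \ref{lemma0}. Using this fact, I choose inductively $n_1<n_2<\cdots$ with $T^{n_k}(x_0)\in W_k$, and set $S=\{n_k:k\in\mathbb{N}\}$. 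Then $S$ is infinite and $T^{n_k}(x_0)\to 0$.

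\emph{Density of $A_S(0)$.} Fix $j\ge 0$. Then $T^{n_k}(T^jx_0)=T^j(T^{n_k}x_0)\to T^j(0)=0$ by continuity of $T^j$. So every point of $\mathcal{O}(x_0)\cup\{x_0\}$ lies in $A_S(0)$, and therefore $A_S(0)$ contains a dense set.

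The only point requiring care is that the return times to shrinking neighbourhoods of $0$ can be chosen strictly increasing, i.e.\ the infinitude of $N_T(x_0,V)$ shown above. Once that is secured, continuity of each fixed power $T^j$ transfers the convergence $T^{n_k}(x_0)\to0$ along the same $S$ to the whole orbit. This is why a single $S$ works simultaneously for the dense set $\mathcal{O}(x_0)$, and, via $A_S(x)=x+A_S(0)$, for every $x\in X$.
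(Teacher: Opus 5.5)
Your proof is correct and is essentially the paper's argument. You take $S$ to be a strictly increasing sequence of times $n_k$ with $T^{n_k}(x_0)\to 0$ for a hypercyclic vector $x_0$, and continuity of $T^j$ then makes $x+T^j(x_0)$ $S$-asymptotic to $x$; your identity $A_S(x)=x+A_S(0)$ repackages this, and it makes explicit, more clearly than the paper does, that $S$ does not depend on $x$ and that the $n_k$ can be chosen strictly increasing. One trivial slip: $V\setminus\{T(x_0),\dots,T^m(x_0)\}$ may still contain $x_0$ itself, so remove $x_0$ as well (as Lemma \ref{lemma0} does with $y$), or just note that $\{T^n(x_0):n\ge 1\}$ is still dense.
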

    \begin{proof}
        Now $(X,T)$ is transitive and let $x\in X$. Then there exist a transitive point $z\in X$ and an increasing sequence $(n_k)$ of natural numbers such that $T^{n_k}(z)\to 0.$ Let  $S=\{n_k:k\in \mathbb{N}\}$, then $S$ is infinite. Let   $U$ be any opene subset of $X.$ Then there exists some $j\in \mathbb{N}$ such that $T^j(z)\in U-x = \{u - x: u \in U\}.$ Take $y=x+T^j(z)\in U$. Now $||T^{n_k}(y)-T^{n_k}(x)||=||T^{n_k}(T^j(z))||=||T^j(T^{n_k}(z))||\to 0$ as $k\to \infty.$ So $y\in A_S(x) \cap U$ and it proves that $A_S(x)$ is dense in $X.$
  \end{proof}

\begin{Theorem}
	Let $X$ be an infinite dimensional, separable $F$-space and $T\in \mathcal{L}(X).$ Then $(X,T)$ is weakly mixing if and only if there exists a filter $\mathcal{F}$ such that the following conditions hold \par
	\begin{enumerate}
		\item For every $S\in k\mathcal{F}$ and $x\in X$, the set $A_S(x)$ is dense in $X;$
		\item For every opene  $U\subset X$ and for every open  $W \ni 0$ the set $N(W,U)\in \mathcal{F}.$
	\end{enumerate}
\end{Theorem}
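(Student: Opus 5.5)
Both directions go through the $\mathcal{F}$-Transitivity Criterion of Theorem \ref{hyper} and Corollary \ref{hyper1}.

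\emph{($\Rightarrow$).} Let $(X,T)$ be weakly mixing. Apply Theorem \ref{hyper} with $\mathcal{F}=\mathcal{J}$; then $\widetilde{\mathcal{J}}$-transitivity is exactly weak mixing by Lemma \ref{F WEAK}. This gives the filter $\mathcal{F}'$ built in the proof of $(1)\Rightarrow(2)$, generated by the decreasing sets
\[
F_k=N(U_k,W_k)\cap N(W_k,V_k).
\]
Here $(U_k),(V_k),(W_k)$ are neighbourhood bases at two transitive points $x,y$ with disjoint orbits and at $0$. Take this $\mathcal{F}'$ as the filter in the statement.

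For condition (2), we have $(X,T)$ hereditarily $\mathcal{F}'$-transitive, so $N(W,U)\in\mathcal{F}'$ for every opene $U$ and every open $W\ni 0$.

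For condition (1), fix $S\in k\mathcal{F}'$, a point $z\in X$ and an opene $U$. We want $u\in U$ with $T^n(u-z)\to 0$ along $n\in S$. By linearity it suffices that $A_S(0)$ be dense, since $A_S(z)=z+A_S(0)$. Since $S$ meets every $F_k$, pick $m_1<m_2<\dots$ in $S$ with $m_k\in F_k$. The criterion maps $I_n$ satisfy
\[
I_{m_k}(T^jx)\to T^jx \quad\text{and}\quad T^{m_k}I_{m_k}(T^jx)\to 0 .
\]
This only gives convergence along the subsequence, not asymptoticity of a single point.

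So instead use a Baire argument on $T|_{\ldots}$. Consider the sequence $(T^{m_k})$. Weak mixing gives that $T$ satisfies the Hypercyclicity Criterion along $(m_k)$: condition (i) along $(m_k)$ follows from (2) with the choice $m_k\in F_k\subset N(U_k,W_k)$. Then the set of $u$ with $T^{m_k}u\to 0$ along a further subsequence is residual. Asymptoticity along all of $S$, however, needs more: we may shrink $S$. Here $A_S$ is \emph{smaller} for larger $S$, and $S$ is arbitrary in $k\mathcal{F}'$. This is the main obstacle. Two possible remedies are:
\begin{itemize}
\item reinterpret (1) as "there is an infinite $S'\subset S$" (as in the preceding theorem);
\item choose $\mathcal{F}'$ finer so that $k\mathcal{F}'$ consists of sets containing some tail $\{n_k:k\ge k_0\}$ of a sequence along which a dense set $D_1$ tends to $0$.
\end{itemize}
I would try the second: use the Hypercyclicity Criterion sequence $(n_k)$ and the filter $\mathcal{F}$ of the preceding corollary. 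For this filter, condition (i), $T^{n_k}x\to 0$ on dense $D_1$, and linearity give $A_S(z)\supset z+D_1$ dense whenever $S$ is essentially contained in $\{n_k\}$. One then checks that (2) holds via $N(W,U)\supset$ a tail of $(n_k)$, from conditions (ii) and (iii).

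\emph{($\Leftarrow$).} Assume (1) and (2). Take opene $U_1,V_1,U_2,V_2$ and a small $W\ni 0$. By (2),
\[
N(W,V_1)\cap N(W,V_2)\in\mathcal{F},
\]
using that $\mathcal{F}$ is a filter. To show $N(U_1,V_1)\cap N(U_2,V_2)\ne\emptyset$, argue by contradiction. If $C=N(U_1,V_1)\cap N(U_2,V_2)$ missed some member of $\mathcal{F}$, the complement would lie in $k\mathcal{F}$.

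Cleaner is a direct argument. For $u\in U_i$, (1) applied with $S=N(W,V_1)\cap N(W,V_2)$ gives $a_i\in A_S(0)\cap(U_i-\text{small})$. The intersection is in $\mathcal{F}\subset k\mathcal{F}$, as a filter is contained in its dual. Then for large $n\in S$ we have $T^na_i\in W$, and adding $w_i\in W$ with $T^nw_i\in V_i'$ gives $a_i+w_i\in U_i$ and $T^n(a_i+w_i)\in V_i$. Hence $N(U_1,V_1)\cap N(U_2,V_2)\neq\emptyset$. Together with Lemma \ref{F1}, or directly via $T\oplus T$, this yields weak mixing.
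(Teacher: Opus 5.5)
The gap is in the forward direction, condition (1), and you located it yourself without closing it. The remedy you say you would pursue is the filter $\mathcal{F}$ of supersets of tails of a Hypercyclicity Criterion sequence $(n_k)$, together with $z+D_1\subseteq A_{\{n_k\}}(z)$. This is exactly the paper's proof, and it only shows that $A_S(z)$ is dense when $S\setminus\{n_k:k\in\mathbb{N}\}$ is finite. Condition (1), however, quantifies over all of $k\mathcal{F}$. For every filter, $k\mathcal{F}$ is upward closed and contains $\mathbb{N}$, so no refinement of the filter helps. (Your phrase ``$k\mathcal{F}'$ consists of sets containing some tail'' describes $\mathcal{F}$, not $k\mathcal{F}$, and such sets are not essentially contained in $\{n_k\}$ either.) Since $S\subseteq S'$ implies $A_{S'}(x)\subseteq A_S(x)$, condition (1) is, for any filter, equivalent to density of $A_{\mathbb{N}}(0)=\{z:T^nz\to0\}$. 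The paper passes over exactly this point with the words ``and so $A_S(x)$ is dense''.

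This step cannot be repaired, because the forward implication is false as stated. Let $B_we_j=w_je_{j-1}$ on $\ell^2(\mathbb{Z})$, with $w_j=2$ for $j\ge1$. Arrange $w_0,w_{-1},w_{-2},\dots$ in consecutive blocks of $k$ entries equal to $\frac12$ followed by $k$ entries equal to $2$, for $k=1,2,\dots$. Put $P_n=\prod_{i=-n+1}^{0}w_i$. Then:
\begin{itemize}
\item $P_n=2^{-k}$ at the end of the $k$-th block of halves (call this $n_k$);
\item $P_n=1$ infinitely often;
\item $4^{-|j|}P_n\le\|B_w^ne_j\|\le4^{|j|}P_n$;
\item $\|B_w^{-n}e_j\|\to0$ for every $j$.
\end{itemize}
Hence $B_w$ satisfies the Hypercyclicity Criterion along $(n_k)$, with $D_1=D_2$ the finitely supported vectors and $B_w^{-n}$ as right inverses, so it is weakly mixing. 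But if $x_j\neq0$ then $\|B_w^nx\|\ge4^{-|j|}|x_j|P_n\not\to0$. So $\{z:B_w^nz\to0\}=\{0\}$, and (1) fails for every filter.

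Your first remedy is the correct fix: require only that each $S\in k\mathcal{F}$ contain an infinite $S'$ with $A_{S'}(x)$ dense for all $x$. For the Hypercyclicity Criterion filter, take $S'=S\cap\{n_k:k\in\mathbb{N}\}$. Your converse is correct and goes through verbatim with $S'$ in place of $S$. By treating both pairs at once with $S=N(W,V_1')\cap N(W,V_2')\in\mathcal{F}\subseteq k\mathcal{F}$, it also avoids the paper's inference ``so $N(U,V)\in\mathcal{F}$''. That inference is not valid for an arbitrary filter, since a member minus finitely many points need not remain in it.
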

\begin{proof}
	Suppose that $T$ is weakly mixing then $T$ satisfies Hypercyclicity Criterion with respect to some sequence $(n_k)$. Consider the filter $\mathcal{F}=\{A\subset \mathbb{N}: \ \text{there exists}\ k_0\in \mathbb{N}\ \text{with}\ \{n_k:k\geq k_0\}\subset A\}.$ Let $S\in k\mathcal{F}, \ x\in X$ and $U \subset X$ be  opene.  Then from the first condition of Hypercyclicity Criterion, there exists $z$ with $x+z\in U$ such that $T^{n_k}(z)\to 0.$ Consequently, $||T^{n_k}(x+z)-T^{n_k}(x)||\to 0$ and so $A_S(x)$ is dense in $X.$ Futhermore, let $W$ be an open neighbourhood of $0$ and $V$ be an opene subset of $X$. Also, let $y\in V$. Then using the second condition of hypercyclicity criterion there exists a sequence $(y_k)$ such that $(y_k)\to 0$ and $T^{n_k}(y_k)\to y.$ Therefore for sufficiently large $k$, $y_k\in W$ and $T^{n_k}(y_k)\in V$. Hence $N(W,V)\in \mathcal{F}$.\par
	Conversely, suppose that there is a filter $\mathcal{F},$ for which the given conditions hold. Let $U$ and $V$ two opene subsets of $X.$ Then there exists a neighbourhood $W$ of $0$ and two opene subsets $U^{\prime}, V^{\prime}$ such that $U^{\prime}+W \subset U$ and $V^{\prime}+W \subset V.$ From the given conditions the set $S=N(W,V^{\prime}) \in \mathcal{F}.$ Then we can write $S=\{n_1 < n_2 < \dots \}$. Clearly $S\in k\mathcal{F}.$ Now from the asymptotic condition there is some $z\in U^{\prime}$ such that $||T^{n_k}(z)-T^{n_k}(0)||=||T^{n_k}(z)||\to 0.$ Then there exists $k_0\in \mathbb{N}$ such that for every $k\geq k_0$ $T^{n_k}(z)\in W$ and $T^{n_k}(w_k)\in V^{\prime}$ for some $w_k\in W.$ Consequently for every $k\geq k_0$ $z+w_k\in U$ and $T^{n_k}(z+w_k)\in V.$ Hence $\{n_k:k\geq k_0\}\subset N(U,V)$ and so $N(U,V)\in \mathcal{F}.$ This implies that $T$ is weakly mixing.
\end{proof}
\begin{Theorem}\label{kitai prox}
	Let $X$ be an infinite dimensional, separable $F$-space and $T\in \mathcal{L}(X).$ Also, let $\mathcal{F}$ be a filter on $\mathbb{N}.$ Then $T$ satisfies $\mathcal{F}$-Kitai's Criterion if and only if $(X,T)$ is $\mathcal{F}$-transitive and for every $x\in X,$ the set $P_{\mathcal{F}}(x)$ is dense in $X$.  
\end{Theorem}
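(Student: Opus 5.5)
For the forward direction, I would assume $T$ satisfies the $\mathcal{F}$-Kitai's Criterion with dense sets $D_1,D_2$ and maps $S_n:D_2\to X$. Taking $I_n(x)=x$ for $x\in D_1$ shows that $T$ also satisfies the $\mathcal{F}$-Transitivity Criterion of Definition \ref{def1}. Corollary \ref{hyper1} then gives that $(X,T)$ is $\mathcal{F}$-transitive, since $\mathcal{F}$ is a filter.

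Density of the proximal cells comes straight from $D_1$. Fix $x\in X$ and an opene $U$. Choose $z\in D_1\cap(U-x)$ and put $y=x+z\in U$. Then $\|T^n(y)-T^n(x)\|=\|T^n(z)\|$, and since $\mathcal{F}$-$\lim_n T^n z=0$, the set $\{n:\|T^nz\|<\varepsilon\}$ lies in $\mathcal{F}$ for every $\varepsilon>0$. Hence $y\in P_{\mathcal{F}}(x)\cap U$, so $P_{\mathcal{F}}(x)$ is dense.

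For the converse, I would first apply Corollary \ref{hyper1}. Since $\mathcal{F}$ is a filter and $(X,T)$ is $\mathcal{F}$-transitive, $T$ satisfies the $\mathcal{F}$-Transitivity Criterion, which supplies a dense $D_2$ and maps $S_n$ with $\mathcal{F}$-$\lim_n(S_n y,T^nS_n y)=(0,y)$. This is exactly condition (ii) of the $\mathcal{F}$-Kitai's Criterion. It remains to produce a dense $D_1$ with $\mathcal{F}$-$\lim_n T^n x=0$ for every $x\in D_1$.

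The natural candidate is $D_1=P_{\mathcal{F}}(0)$. By linearity, $(0,z)$ being $\mathcal{F}$-proximal means precisely that $\{n:\|T^nz\|<\varepsilon\}\in\mathcal{F}$ for all $\varepsilon>0$, that is, $\mathcal{F}$-$\lim_n T^n z=0$. The hypothesis applied with $x=0$ says this set is dense. This gives condition (i), and the converse is done.

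The main obstacle is not this step but checking that Corollary \ref{hyper1} really yields maps $S_n$ defined on a dense set, with the filter convergence taken with respect to $\mathcal{F}$ itself. The corollary follows from Theorem \ref{hyper} with $\widetilde{\mathcal{F}}$ replaced by $\mathcal{F}$, and its filter $\mathcal{F}'$ is generated by the sets $F_k$. So I must check that, when $\mathcal{F}$ is a filter, $\mathcal{F}$-transitivity already forces each $F_k=N(U_k,W_k)\cap N(W_k,V_k)$ into $\mathcal{F}$; this uses closure under finite intersections. Then $\mathcal{F}'\subset\mathcal{F}$, and $\mathcal{F}'$-convergence implies $\mathcal{F}$-convergence because $\mathcal{F}$ is upward hereditary. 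I would take that equivalence as stated and only remark on this monotonicity.
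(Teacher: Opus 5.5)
Your proposal is correct and follows essentially the same route as the paper: the forward direction passes through the $\mathcal{F}$-Transitivity Criterion and Corollary \ref{hyper1}, and gets density of $P_{\mathcal{F}}(x)$ by translating $x$ by points of $D_1$, while the converse takes condition (ii) from Corollary \ref{hyper1} and sets $D_1=P_{\mathcal{F}}(0)$. Your extra check is a valid justification of a step the paper leaves implicit: for a filter $\mathcal{F}$, the sets $F_k$ already lie in $\mathcal{F}$, so $\mathcal{F}'\subset\mathcal{F}$ and $\mathcal{F}'$-convergence gives $\mathcal{F}$-convergence.
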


\begin{proof}
	Suppose that $T$ satisfies $\mathcal{F}$-Kitai's Criterion. Consequently, $T$ satisfies $\mathcal{F}$-Transitivity Criterion. Then from Corollary \ref{hyper1}, $(X,T)$ is $\mathcal{F}$-transitive. Also, from the first condition of $\mathcal{F}$-Kitai's Criterion there exists a dense subset $D$ such that $\mathcal{F}$-$\lim_n T^n(y)=0$ for every $y\in D.$ Let $x\in X$ and $U$ be any opene subset of $X.$ Take $z\in (U-x)\cap D$. Then $\mathcal{F}$-$\lim_n T^n(z)=0$. Consequently, for every $\varepsilon>0$, the set  $\{n\in \mathbb{N}:||T^{n_k}(x+z)-T^{n_k}(x)||<\varepsilon\}\in \mathcal{F}.$ So the pair $(x, x+z)$ is $\mathcal{F}$-proximal. Hence the set $P_{\mathcal{F}}(x)$ is dense in $X$.\par 
	Conversely, suppose that $(X,T)$ is $\mathcal{F}$-transitive. Then from Corollary \ref{hyper1}, $T$ satisfies $\mathcal{F}$-transitivity Criterion. Consequently, $T$ satisfies the second condition of $\mathcal{F}$-Kitai's Criterion. Now from the given condition the set $P_{\mathcal{F}}(0)$ is dense in $X.$ Then $\mathcal{F}$-$\lim_n T^n(x)=0$ for every $x\in P_{\mathcal{F}}(0).$ Hence $T$ satisfies $\mathcal{F}$-Kitai's Criterion.
\end{proof}

Now from the above Theorem we can say that for a filter $\mathcal{F}$, $\mathcal{F}$-transitivity along with the condition that for every point $x\in X$ the corresponding proximal cell $P_{\mathcal{F}}(x)$ is dense in $X$, is equivalent with $\mathcal{F}$-Kitai's Criterion.   Hence proximity relation fills the gap between $\mathcal{F}$-transitivity and $\mathcal{F}$-Kitai's Criterion. In particularly, by taking $\mathcal{F}= K\mathcal{J}$ one can say that the asymptotic relation fills the gap between mixing operators and Kitai's Criterion. We note this  in the following Corollary.
\begin{Corollary}
	Let $T$ be a bounded linear operator defined on an infinite dimensional, separable $F$ space. Then $T$ satisfies Kitai's Criterion if and only if $(X,T)$ is mixing and for every $x\in X,$ the set $A_{\mathbb{N}}(x)$ is dense in $X$.  
\end{Corollary}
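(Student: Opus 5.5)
This is the special case $\mathcal{F}=k\mathcal{J}$ of Theorem \ref{kitai prox}, so the plan is to specialize that theorem and translate each piece into the language of the statement.

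\textbf{Step 1: $k\mathcal{J}$ is a filter.} Here $k\mathcal{J}$ is the family of cofinite subsets of $\mathbb{N}$. It is nonempty, every member is infinite, it is hereditarily upward, and the intersection of two cofinite sets is cofinite. Hence Theorem \ref{kitai prox} applies with $\mathcal{F}=k\mathcal{J}$.

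\textbf{Step 2: the three ingredients of Theorem \ref{kitai prox} for $\mathcal{F}=k\mathcal{J}$.}

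First, $k\mathcal{J}$-Kitai's Criterion is exactly Kitai's Criterion. A statement $k\mathcal{J}$-$\lim_n z_n=z$ means that for every neighbourhood $U$ of $z$, the set $\{n: z_n\in U\}$ is cofinite, which is ordinary convergence $z_n\to z$. So condition (i) reads $T^nx\to 0$ on $D_1$, and condition (ii) reads $S_ny\to 0$ and $T^nS_ny\to y$ on $D_2$. The remark following Corollary \ref{mixing} already records that $\mathcal{F}=k\mathcal{J}$ in the criterion of \cite{bes ST} gives Kitai's Criterion.

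Second, $k\mathcal{J}$-transitivity means that $N(U,V)$ is cofinite for all opene $U,V$, which is the definition of mixing.

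Third, $P_{k\mathcal{J}}(x)=A_{\mathbb{N}}(x)$. The pair $(x,y)$ lies in $P_{k\mathcal{J}}$ exactly when, for every $\varepsilon>0$, the inequality $\|T^nx-T^ny\|<\varepsilon$ holds for all but finitely many $n$. That is the same as $\lim_n\|T^nx-T^ny\|=0$, i.e. $(x,y)$ is $\mathbb{N}$-asymptotic. This equivalence was already noted in the text.

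\textbf{Step 3: conclude.} Substituting these three identifications into the equivalence of Theorem \ref{kitai prox} gives the Corollary directly.

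The only step that needs care is Step 2. In the $F$-space setting, $\|\cdot\|$ is the $F$-norm, and one should confirm that the neighbourhood formulation of $\mathcal{F}$-convergence agrees with the $\varepsilon$-formulation used in the definition of $\mathcal{F}$-proximality. For a cofinite filter this is immediate, since both reduce to ordinary metric convergence.
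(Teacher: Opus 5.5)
Your proposal is correct and follows the paper's own argument. The paper proves the corollary in one line by taking $\mathcal{F}=k\mathcal{J}$ in Theorem~\ref{kitai prox}. Your Step~2 spells out the identifications the paper leaves implicit: $k\mathcal{J}$-Kitai's Criterion is Kitai's Criterion, $k\mathcal{J}$-transitivity is mixing, and $P_{k\mathcal{J}}(x)=A_{\mathbb{N}}(x)$.
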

\begin{proof}
	By taking $\mathcal{F}= K\mathcal{J}$ in Theorem \ref{kitai prox}, one can get the result.
\end{proof}

\bigskip 

We now define a weakening of the proximal relation for linear operators:
\begin{Definition}
	Let $X$ be an infinite dimensional, separable $F$-space and $T\in \mathcal{L}(X).$ Then  $(x,y)\in X\times X$ is called a \emph{regionally proximal pair} if there exist two sequences $(x_n)\to x$ and $(y_n)\to y$ such that $\displaystyle{\liminf_{n\to \infty}} ||T^n(x_n)-T^n(y_n)||=0.$    
	Then $$RP=\{(x,y):(x,y) \ \text{is regionally proximal}\}$$ is called the \emph{regionally proximal relation} on $X$.
\end{Definition}
\begin{Theorem}\label{RP}	
	Let $X$ be an infinite dimensional, separable $F$-space and $T\in \mathcal{L}(X).$ Suppose that $(X,T)$ is transitive. Then the regionally proximal relation $RP=X\times X.$ 
\end{Theorem}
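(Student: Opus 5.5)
Because $T$ is linear, the pair $(x,y)$ is regionally proximal as soon as we can find sequences $(x_n)\to x$ and $(y_n)\to y$ with $\|T^n(x_n-y_n)\|\to 0$ along some infinite set of $n$. So the plan is to fix $x,y\in X$, keep $y_n=y$, and take $x_n=y+z_n$. Then everything reduces to one claim: for every $v\in X$ (here $v=x-y$) there are vectors $z_n\to v$ with $\liminf_{n}\|T^n z_n\|=0$.

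Roughly, this claim says that every $v$ is "regionally proximal to $0$". I would derive it from transitivity in two steps.

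First, fix $v$ and a decreasing neighbourhood base $(U_k)$ of $v$ and $(W_k)$ of $0$. Since $(X,T)$ is transitive, $N(U_k,W_k)\neq\emptyset$ for every $k$.

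Second, I would argue that $N(U_k,W_k)$ is in fact infinite. Take a transitive point $z$. Its orbit enters $U_k$ at some time $m$. Since $\mathcal{O}(T^m z)$ is again dense ($X$ has no isolated points), it meets $W_k$ at arbitrarily large times $n$. Hence $N(U_k,W_k)$ is infinite.

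With infinitude in hand, I choose an increasing sequence $n_1<n_2<\cdots$ with $n_k\in N(U_k,W_k)$, and points $u_k\in U_k$ with $T^{n_k}u_k\in W_k$. Then I define $z_n=u_k$ for $n_k\le n<n_{k+1}$, and $z_n=v$ for $n<n_1$. This gives $z_n\to v$, because the index $k$ tends to infinity as $n$ does. It also gives $\|T^{n_k}z_{n_k}\|=\|T^{n_k}u_k\|\to 0$, so the $\liminf$ is $0$.

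Finally, set $x_n=y+z_n\to y+v=x$ and $y_n=y$. Then $T^n x_n-T^n y_n=T^n z_n$, whose $\liminf$ is $0$, so $(x,y)\in RP$. Since $x,y$ were arbitrary, $RP=X\times X$.

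The main obstacle is making sure the times $n_k$ can be chosen strictly increasing, i.e. that $N(U_k,W_k)$ is not just nonempty but unbounded. This is exactly the density-of-orbit argument in the second step. If it caused trouble, an alternative is to use sensitivity (Theorem \ref{minimal}) together with the fact that, for each fixed $n$, continuity of $T^n$ prevents $n$ from lying in $N(U_k,W_k)$ for all $k$ whenever $v\neq 0$. This is the same device as in the proof of Theorem \ref{hyper}, showing $\bigcap_k F_k=\emptyset$, and it forces the chosen times to go to infinity. The case $v=0$ (that is, $x=y$) is trivial: take $z_n=0$.
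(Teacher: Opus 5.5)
Your proof is correct and is essentially the paper's argument: the paper picks $n_k\in N(U_k-x,W_k)$ with $U_k$ shrinking to $y$, which is your construction with the roles of $x$ and $y$ swapped. Your two extra steps (showing $N(U_k,W_k)$ is infinite so the $n_k$ can be chosen strictly increasing, then filling in $z_n$ between consecutive $n_k$) supply indexing details that the paper's proof glosses over.

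Your fallback remark, however, is false as stated and should be dropped. If $n\in N(U_k,W_k)$ for all $k$, continuity only gives $T^nv=0$, which is no contradiction when $v\neq 0$ lies in $\ker T^n$ (e.g.\ $v=(1,0,0,\dots)$ for $\lambda B$). The device in Theorem \ref{hyper} works only because there the sets are $N(W_k,V_k)$, with $V_k$ shrinking to a point $y\neq 0$.
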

\begin{proof}
	Let $(x,y)\in X\times X.$ Also, let $(U_k)$ and $(W_k)$ be the neighbourhood base of $y$ and $0$ respectively. 
	
	Now choose $n_k\in N(U_k-x, W_k)$. Then for every $k$ there is some $z_k\in U_k-x$ such that $T^{n_k}(z_k)\in W_k.$ So the sequence $(x+z_k)$ converges to $y$ and $||T^{n_k}(x+z_k)-T^{n_k}(x)||\to 0$. Hence $(x,y)\in RP.$ 
\end{proof}

\bigskip

\section{Strongly Transitive Operators}

Again recall that $(\widehat{X},\widehat{T})$ is a compactification of $(X,T)$ where $X$ is an infinite dimensional, seperable Banach space.
 
 \bigskip 
 
 We now recall another stronger form of transitivity from \cite{AN} and define that for linear systems.  The 	 system $(\widehat{X},\widehat{T})$  is \emph{strongly transitive} if for every opene $U \subset \widehat{X}$, $\bigcup \limits_{n=1}^\infty {\widehat{T}}^n(U) = \widehat{X}$, equivalently for every  opene set $U \subset \widehat{X}$ and every point $x \in \widehat{X}$,	 the set $N_{\widehat{T}}(U,x)$ is nonempty. It can be noted that for invertible systems, strongly transitive is equivalent to minimality, and so linear systems can be strongly transitive only in the  non-invertible cases. The 	 system $(\widehat{X},\widehat{T})$  is \emph{strongly product transitive} if the product system $(\underbrace{\widehat{X} \times \ldots \times  \widehat{X}}_n,\underbrace{\widehat{T} \times \ldots \times \widehat{T}}_n)$  is strongly transitive for all $n \in \mathbb{N}$.
 
 As discussed earlier, we only use open sets in this definition which does not require any compactification and so can be defined for any linear system on an infinite dimensional, seperable $F$-space.  A linear dynamical system $(X,T)$ is \emph{Strongly Transitive (ST)} if for every opene $U \subseteq X$, $\bigcup \limits_{n=1}^\infty \ T^n(U) = X$; and is  \emph{strongly product transitive (SPT)} if the product system $(\underbrace{X \times \ldots \times  X}_n,\underbrace{T \times \ldots \times T}_n)$  is strongly transitive for all $n \in \mathbb{N}$.	  

	\begin{Theorem}\label{ST}
		Let $X$ be an infinite dimensional, separable $F$-space and $T\in \mathcal{L}(X).$ Then $(X,T)$ is strongly transitive if and only if there exists a dense set $D\subset X$ satisfying:
		
		$(i)$ For every $x\in D$, there exists $n\in \mathbb{N}$ such that $T^n(x)=0.$\par 
		$(ii)$ For every $y\in X$ there exists a sequence $(n_k)$ in $\mathbb{N}$ and a point $y_{k}\in T^{-n_k}(y)$ such that $y_{k}\to 0$.
	\end{Theorem}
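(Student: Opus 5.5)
The plan is to prove both implications directly from the definition of strong transitivity, using only linearity and continuity of the powers $T^n$.

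\emph{Necessity.} Assume $(X,T)$ is strongly transitive and put $D=\bigcup_{n\geq 1}Ker(T^n)$.
\begin{enumerate}
\item[(a)] For condition $(i)$, take any opene $U\subset X$. Since $0\in X=\bigcup_{n\ge1}T^n(U)$, there are $n\in\mathbb{N}$ and $x\in U$ with $T^n(x)=0$. Hence $D\cap U\neq\emptyset$, so $D$ is dense and every point of $D$ satisfies $(i)$.
\item[(b)] For condition $(ii)$, fix $y\in X$ and a decreasing neighbourhood base $(W_k)$ at $0$. Strong transitivity applied to the opene set $W_k$ gives $n_k\in\mathbb{N}$ and $y_k\in W_k$ with $T^{n_k}(y_k)=y$. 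Then $y_k\in T^{-n_k}(y)$ and $y_k\to 0$.
\end{enumerate}

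\emph{Sufficiency.} Assume $D$ satisfies $(i)$ and $(ii)$. Fix an opene $U$ and a point $y\in X$; we must find $n\in\mathbb{N}$ with $y\in T^n(U)$.
\begin{enumerate}
\item[(a)] Choose an opene $U'$ and a neighbourhood $W$ of $0$ with $U'+W\subset U$. Pick $x\in U'\cap D$, and let $m\in\mathbb{N}$ satisfy $T^m(x)=0$. Then $T^n(x)=0$ for all $n\geq m$.
\item[(b)] If $y=0$, then $T^m(x)=0=y$ with $x\in U$, and we are done.
\item[(c)] If $y\neq 0$, take the sequence $(n_k)$ and points $y_k$ from $(ii)$. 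The key step is that $(n_k)$ is unbounded. Otherwise some value $n$ occurs for infinitely many $k$. Along those $k$ we have $y_k\to 0$ and $T^n(y_k)=y$, so continuity of $T^n$ forces $y=T^n(0)=0$, a contradiction.
\item[(d)] Hence we can choose $k$ with $n_k\geq m$ and $y_k\in W$. Then $x+y_k\in U'+W\subset U$, and
\[T^{n_k}(x+y_k)=T^{n_k}(x)+T^{n_k}(y_k)=0+y=y.\]
So $y\in T^{n_k}(U)$.
\end{enumerate}
Since $U$ and $y$ were arbitrary, $\bigcup_{n\ge1}T^n(U)=X$, and $(X,T)$ is strongly transitive.

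The only real obstacle is step (c). Condition $(ii)$ does not say that $(n_k)$ is increasing, and we need $n_k\ge m$ so that the contribution $T^{n_k}(x)$ vanishes. The argument above handles this by continuity of $T^n$ for $y\neq 0$, while the case $y=0$ is covered separately in step (b) by $x$ itself.
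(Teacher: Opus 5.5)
Your proof is correct and follows the paper's argument essentially step for step. Both use the same dense set of eventually-annihilated vectors, the same neighbourhood-base argument for $(ii)$, and the same $x+y_k$ construction for sufficiency. Your step (c) shows that $(n_k)$ cannot be bounded when $y\neq 0$ and treats $y=0$ separately; this supplies a detail the paper leaves implicit when it asserts $T^{n_k}(x+y_k)=y$ for large $k$, which needs $n_k\geq N$ eventually.
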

	\begin{proof}
		Suppose that $(X,T)$ is strongly transitive. Then for every opene $U\subset X = \bigcup \limits_{n=1}^\infty \ T^n(U),$ there exist $x\in U$ and $n\in \mathbb{N}$ such that $T^n(x)=0.$ Hence the set $D=\{x:\  T^n(x)=0\ \text{for some}\ n\in \mathbb{N}\}$ is dense in $X$ and this proves condition $(i).$ \par 
		Let $(U_k)$ be the neighbourhood base at $0$ and $y\in X.$ Then for every $k\in \mathbb{N}$, there exist $y_k\in U_k$ and $n_{k}\in \mathbb{N}$ such that $T^{n_k}(y_k)=y.$ This proves condition $(ii).$\par 
		 
		Conversely, suppose $T$ satisfies the given conditions. Let $U$ be opene in $X$ and $y\in X.$ Choose $x\in U\cap D.$ Then from condition $(i)$, there exists $N\in \mathbb{N}$ such that $T^n(x)=0$ for every $n\geq N.$ On the other hand from condition $(ii)$, there exists a sequence $(n_k)$ and $y_{k}\in T^{-n_k}(y)$ for every $k\in \mathbb{N}$ such that $y_{k}\to 0$. Consequently, for large enough $k,$ $x+y_k\in U$ and $T^{n_k}(x+y_k)=y.$ Hence $X = \bigcup \limits_{n=1}^\infty \ T^n(U)$ and so $(X,T)$ is strongly transitive.
	\end{proof}

We recall:

\begin{Theorem} [Theorem 8 \cite{racsum}] \label{racs}
	Let $T$ be a topologically transitive operator. If there exists a dense subset $D$ such that for every $x\in D$, the orbit of $x$ is bounded, then $T$ is weakly mixing.
\end{Theorem}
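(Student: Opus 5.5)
The plan is to verify the transitivity condition on $T\oplus T$ through the usual three-open-set reduction. Recall the standard consequence of Lemma \ref{F1} and the Hypercyclicity Criterion (compare [Theorem 4.6 \cite{linear}] and [Theorem 2.3 \cite{Bes2}]): $T$ is weakly mixing as soon as, for every pair of opene sets $U,V\subset X$ and every open $W\ni 0$,
$$N(U,W)\cap N(W,V)\neq\emptyset .$$
Indeed, from this condition one builds, for any $U_1,V_1,U_2,V_2$, opene sets $U',V'$ with $N(U',V')\subset N(U_1,V_1)\cap N(U_2,V_2)$ by the usual ``add a small vector'' argument used in $(2)\Rightarrow(3)$ of Theorem \ref{hyper}. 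So it suffices to find, for given $U,V,W$, one integer $n$, a point $u\in U$ with $T^nu\in W$, and a point $w\in W$ with $T^nw\in V$.

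First fix a balanced open $W_0\ni 0$ with $W_0+W_0\subset W$. Then use the density of $D$ to pick $x\in D$ with $x\in U$ and $U-x\supset W_0$, after shrinking $W_0$. Since the orbit $\{T^kx\}$ is bounded in the $F$-space $X$, there is $t\in(0,1]$ such that $t\,T^kx\in W_0$ for all $k$. The point of boundedness is that the whole orbit of a suitable scalar multiple, or of a point of $D$ near $0$ (note that $D$ may be replaced by $\bigcup_{\lambda}\lambda D$, which still consists of bounded-orbit vectors), stays uniformly inside $W_0$.

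Next take a hypercyclic vector $h$. The set of hypercyclic vectors is dense, and scalar multiples of $h$ are again hypercyclic. Choose $h$ in a small neighbourhood of $0$ inside $W_0$, and choose $n$ so that $T^nh$ lies in a prescribed open set built from $V$ and the bounded orbit. Then set $w=h$ (or a combination $h+$ a bounded-orbit vector) so that $T^nw\in V$, and set $u=x+\text{(small vector)}$ so that $T^nu\in W$. The idea is to use $D$ to control the $n$-th iterates uniformly in $n$: whatever $n$ the hypercyclic vector forces, a bounded-orbit perturbation keeps the other coordinate inside $W$. I would first try to reduce to $x$ close to $0$ (replace $U$ by $U-x_0$ for some $x_0\in D$, using that $T^kx_0$ stays bounded so that translating by $x_0$ changes iterates only within a fixed bounded set which can be rescaled into $W_0$), and then apply transitivity to the pair $(W_0,V)$ to get $n$ and $w$.

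The main obstacle is exactly the simultaneity. Transitivity gives an $n$ with $T^n(W_0)\cap V\neq\emptyset$, but we then need a vector of $U$ whose $n$-th iterate is in $W$ for that same $n$. Boundedness of orbits of a dense set is what must be turned into this uniform control. I expect the cleanest route is to show instead that $N(W_0,V)$ meets $N(U,W)$ by proving that every $u\in U\cap D$, suitably rescaled around the centre of a small ball in $U$, has all its iterates inside a bounded set $B$. Then using hypercyclicity of $\lambda h$ for small $\lambda$, one chooses $n$ with $T^n(\lambda h)$ hitting $V$ while $\lambda B\subset W_0$. Making this scaling bookkeeping work in a general (non locally convex) $F$-space, where ``bounded'' means absorbed by every neighbourhood of $0$, is the step I expect to require the most care.
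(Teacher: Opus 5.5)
The paper gives no proof of this statement; it is quoted from Grosse-Erdmann--Peris \cite{racsum}, so there is no in-paper argument to compare with. Judged on its own, your proposal has a genuine gap, exactly at the step you flag as the main obstacle.

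Your reduction is fine. The criterion ``$N(U,W)\cap N(W,V)\neq\emptyset$ for all opene $U,V$ and every $0$-neighbourhood $W$'' does characterize weak mixing, though deriving weak mixing from it also requires merging the pairs, e.g.\ passing to $U_1\cap T^{-k}U_2$ and $V_1\cap T^{-l}V_2$, not only adding small vectors. The problem is that nothing afterwards produces a \emph{common} $n$.

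\begin{itemize}
\item Choosing $n$ with $T^n(\lambda h)\in V$ for a small hypercyclic $\lambda h$ only shows $n\in N(W,V)$.
\item Your mechanism for the other half does not work. If $u\in U\cap D$ has orbit in a bounded set $B$, you only know $T^nu\in B$. The inclusion $\lambda B\subset W_0$ controls $T^n(\lambda u)$, but $\lambda u\notin U$.
\item The translation trick fails for the same reason. Replacing $U$ by $U-x_0$ shifts the iterates by $T^nx_0\in B$, and $B$ can be rescaled into $W_0$ only by replacing $x_0$ with $\lambda x_0$, which destroys the relation with $U$.
\item Concretely, writing $u=x+a$ with $a$ small, you need $T^na\approx -T^nx$. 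Here $T^nx$ is an uncontrolled point of $B$ at the time $n$ dictated by $h$. Rescaling cannot manufacture such an $a$: bounded-orbit information is scale invariant, while the two requirements (the graph of $T^n$ passing near $(x,0)$ and near $(0,y)$) point in independent directions. One visit of the orbit of $\lambda h$ controls that graph along a single direction only.
\end{itemize}

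This is not a bookkeeping issue; it is the whole content of the theorem. For \emph{every} transitive operator, each of the two sets is nonempty, and even thick:
\begin{itemize}
\item If $u\in U$ and $T^mu\in\bigcap_{j\le J}T^{-j}W$, then $m,\dots,m+J\in N(U,W)$.
\item Symmetrically, a witness $w\in\bigcap_{j\le J}T^{-j}W$ for $n\in N(\,\cdot\,,V)$ gives $n-J,\dots,n\in N(W,V)$, using $T^{n-j}(T^jw)=T^nw$.
\end{itemize}
Yet transitive operators that are not weakly mixing exist (De la Rosa--Read, Bayart--Matheron). So you need a genuinely new idea that uses the bounded orbits to force these two return-time sets to intersect.

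For comparison, in the chaotic special case one takes a periodic point $p\in W\cap T^{-n}(V)$ of period $N$. Then $N(W,V)\supseteq n+N\mathbb{N}_0$, and an infinite arithmetic progression meets every thick set, in particular $N(U,W)$. For general bounded orbits no such periodicity is available, and your proposal offers no substitute for it.
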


\begin{Corollary}
	A strongly transitive operator is weakly mixing.
\end{Corollary}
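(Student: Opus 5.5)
The plan is to combine Theorem \ref{ST} with Theorem \ref{racs}. Let $(X,T)$ be strongly transitive. By Theorem \ref{racs} it suffices to check two things: that $T$ is topologically transitive, and that some dense subset of $X$ consists of points with bounded orbits.

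First, transitivity. Let $U,V\subset X$ be opene. Strong transitivity gives $\bigcup_{n\ge1}T^n(U)=X$. Pick any $v\in V$; then $v\in T^n(U)$ for some $n\in\mathbb{N}$, so $T^n(U)\cap V\neq\emptyset$. Hence $N(U,V)\neq\emptyset$ and $T$ is topologically transitive, i.e.\ hypercyclic, since $X$ is a separable Baire space.

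Second, bounded orbits on a dense set. By the forward direction of Theorem \ref{ST}, the set
\[
D=\{x\in X:\ T^n(x)=0 \text{ for some } n\in\mathbb{N}\}
\]
is dense in $X$. Take $x\in D$ with $T^{n_0}x=0$. Then $T^m x=0$ for all $m\ge n_0$. So the orbit $\mathcal{O}(x)$ lies in the finite set $\{x,Tx,\dots,T^{n_0-1}x,0\}$. A finite subset of a topological vector space is bounded, so every point of $D$ has a bounded orbit.

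Applying Theorem \ref{racs} now shows that $T$ is weakly mixing. No step is a real obstacle. The only point needing care is that ``bounded'' in an $F$-space means bounded in the topological vector space sense, i.e.\ absorbed by every neighbourhood of $0$, and a finite set clearly satisfies this.
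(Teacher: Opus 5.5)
Your proof is correct and follows the paper's route: Theorem \ref{ST} gives density of the preimages of $0$, whose orbits are finite and hence bounded, and Theorem \ref{racs} then yields weak mixing. You also spell out the easy check that strong transitivity implies topological transitivity, a hypothesis of Theorem \ref{racs} that the paper's one-line proof leaves implicit.
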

\begin{proof}
	The result follows from Theorem \ref{racs} and Theorem \ref{ST} since the  pre-images of $0$ is dense.
\end{proof}

    \begin{Remark} From Theorem \ref{ST} and Remark \ref{kernel}, we can say that a strongly transitive operator is mixing if and only if it satisfies Kitai's Criterion. So mixing operators which do not satisfy Kitai's Criterion, cannot be strongly transitive.

 In case of topological dynamics, strongly transitive is independent of mixing or weakly mixing \cite{AN}. However,  in case of linear dynamics, strongly transitivity implies weakly mixing. We leave this question open:

    \textbf{Question:} Does Strongly transitive imply mixing in linear dynamics? \end{Remark}

\bigskip

\begin{Theorem}\label{SPT}
	Let $X$ be an infinite dimensional separable $F$-space and $T\in \mathcal{L}(X).$ Then $(X,T)$ is strongly product transitive if and only if there exists a dense set $D\subset X$ such that the following conditions hold:\par
	$(i)$ For every $x\in D$, there exists $n\in \mathbb{N}$ such that $T^n(x)=0.$\par
	$(ii)$ For any finite set $\{y_1,\dots,y_n\}\subset X$, there exists a sequence $(n_k)$ such that for every $i\in \{1,\dots,n\}$, we get a sequence of points $(y^i_k)_k$ such that $y^i_{k}\in T^{-n_k}(y_i)$ and $y^i_{k}\to 0$ as $k \to \infty.$
\end{Theorem}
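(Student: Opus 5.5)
The plan is to mimic the proof of Theorem \ref{ST}, applying it to the product systems $(X^m, T\times\dots\times T)$. One point needs care: condition $(i)$ involves only the single space $X$, while strong transitivity of the $m$-fold product concerns the product space.

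\emph{Necessity.} Assume $(X,T)$ is strongly product transitive. Then in particular $(X,T)$ is strongly transitive, and the first half of the proof of Theorem \ref{ST} shows that
$$D=\{x\in X:\ T^n(x)=0 \text{ for some } n\in\mathbb{N}\}$$
is dense; this gives $(i)$. For $(ii)$, fix $y_1,\dots,y_n\in X$ and a decreasing neighbourhood base $(U_k)$ at $0$. Apply strong transitivity of the $n$-fold product to the opene set $U_k\times\dots\times U_k$ and the point $(y_1,\dots,y_n)$. This yields one integer $n_k$ and points $y^i_k\in U_k$ with $T^{n_k}(y^i_k)=y_i$ for every $i$. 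Hence $y^i_k\to 0$ as $k\to\infty$, and the crucial feature is that the same time $n_k$ serves every coordinate simultaneously.

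\emph{Sufficiency.} Fix $m\in\mathbb{N}$, an opene set $U\subset X^m$, and a point $(y_1,\dots,y_m)\in X^m$. Shrink $U$ to a basic box $U_1\times\dots\times U_m$. By density of $D$ choose $x_i\in U_i\cap D$, and let $N_i$ be such that $T^{N_i}x_i=0$. Put $N=\max_i N_i$; by linearity $T^n x_i=0$ for all $n\ge N$ and all $i$. Condition $(ii)$ gives a sequence $(n_k)$ with preimages $y^i_k\to 0$ satisfying $T^{n_k}y^i_k=y_i$.

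Two things must then be checked. First, we need $n_k\ge N$ for large $k$. Either $(n_k)$ is unbounded, in which case we pass to a subsequence, or we argue directly. If $n_k$ took a fixed value $p$ infinitely often, then $T^p y^i_k=y_i$ with $y^i_k\to0$, and continuity of $T^p$ would force $y_i=0$. In that case we can replace the preimage by $0$ itself and take any $n\ge N$. So we may assume $n_k\to\infty$ coordinatewise, treating the zero coordinates separately, and this is the main technical point. Second, once this is settled, for large $k$ we have $x_i+y^i_k\in U_i$ for every $i$, and
$$T^{n_k}(x_i+y^i_k)=y_i \quad\text{for all } i.$$
This gives strong transitivity of the $m$-fold product for every $m$.

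The main obstacle is the unboundedness of $(n_k)$ just described. The paper's proof of Theorem \ref{ST} passes over it silently, so here I would handle it explicitly. If some $y_i\neq0$, continuity of $T^p$ forces $n_k\to\infty$ along the sequence. If all $y_i=0$, the trivial choice $T^{N}(x_i)=0$ already works.
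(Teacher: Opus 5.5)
Your proof is correct and follows the paper's argument. Necessity applies strong transitivity of the product to $U_k\times\dots\times U_k$ with a common time $n_k$, and sufficiency uses $x_i+y^i_k$ with $x_i\in U_i\cap D$. Your explicit check that $n_k\ge N$ can be arranged fills a step the paper's proof uses without comment, and which is needed for $T^{n_k}(x_i+y^i_k)=y_i$: if some value $p$ recurs, continuity of $T^p$ forces all $y_i=0$; otherwise $n_k\to\infty$.
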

\begin{proof}
	
	Suppose that $(X,T)$ is strongly product transitive. Then for every opene $U\subset X,$ there exist $x\in U$ and $n\in \mathbb{N}$ such that $T^n(x)=0.$ Hence the set $D=\{x:\  T^n(x)=0\ \text{for some}\ n\in \mathbb{N}\}$ is dense in $X$ and this proves condition $(i).$ \par
	Let $(U_k)$ be the neighbourhood base at $0$ and $\{y_1,\dots,y_n\}$ be any finite subset $X.$ Since $X$ is strongly product transitive,
	$(
	\underbrace{X \times \cdots \times X}_{n},
	\underbrace{T \times \cdots \times T}_{n})$ is strongly transitive. Then for every $k\in \mathbb{N}$, there exist $(y^1_k,\cdots,y^n_k)\in \underbrace{U_k \times \cdots \times U_k}_{n}$ and $n_{k}\in \mathbb{N}$ such that $T^{n_k}(y^i_k)=y_i$ for $i=1,\cdots, n.$ Hence $(y^i_k)_k\to 0$ for every $i\in \{1,\cdots,n).$ This proves $(ii)$.\par
	\bigskip
	Conversely, suppose that  $T$ satisfies the given conditions. Let $n\in \mathbb{N}$, $(U_1\times \cdots \times U_n)$ be opene in $X \times \cdots \times X$ and $(y_1,\cdots,y_n)\in X \times \cdots \times X.$ Choose $x_i\in U_i\cap D$ for every $i\in \{1,\cdots,n\}.$ Then from condition $(i)$, there exists $N\in \mathbb{N}$ such that $T^n(x_i)=0$ for every $n\geq N$ and $i\in \{1,\cdots,n\}.$\par
	On the other hand, from condition $(ii)$, there exists a sequence $(n_k)$ and $y^i_{k}\in T^{-n_k}(y_i)$ for every $k\in \mathbb{N}$ such that $(y^i_{k})_k\to 0$ for every $i\in \{1,\cdots,n\}$. Consequently, for large enough $k,$ $x_i+y^i_k\in U_i$ and $T^{n_k}(x+y^i_k)=y_i$ for every $i\in \{1,\cdots,n\}$. Hence $(
	\underbrace{X \times \cdots \times X}_{n},
	\underbrace{T \times \cdots \times T}_{n})$ is strongly transitive and so $X$ is strongly product transitive.
	
\end{proof}

\begin{Remark}
	It is trivial to see that strongly product transitive operators will be strongly transitive.
\end{Remark}

\bigskip

We now consider examples:
    
\begin{Example}
	 Let $H(\mathbb{C})$ be the space of all entire functions on $\mathbb{C}$ endowed with the topology of uniform convergence on compact
		sets. Let $D: H(\mathbb{C}) \to H(\mathbb{C})$, defined by $D(f)=f^{\prime}$ be the derivative operator on $H(\mathbb{C}).$ We show that $(H(\mathbb{C}), D)$ is strongly transitive.
		
		Let $P$ be the set of complex polynomials. Clearly $P$ is dense in $H(\mathbb{C})$ and for every $f\in P$ there exists $n\in \mathbb{N}$ such that $D^n(f)=0$. \par
		On the other hand we define a map $S:H(\mathbb{C})\to H(\mathbb{C})$ by
		$$S(f)(z)=\displaystyle{\int_0^z f(\xi)\ d\xi}$$ where
		$f\in H(\mathbb{C})$ and $z\in \mathbb{C}.$ Clearly $DS(f)=f$ for every $f\in H(\mathbb{C})$.\par
		Let $K$ be a compact subset of $\mathbb{C}$ and $f\in H(\mathbb{C}).$ There exists some $r>0$ such that the closed bounded ball  $B=B[0,r]$ of radius $r$ centered at $0$ contains $K$. Then for every $z\in B$ and $t\in [0,1],$ $zt\in B$. Note that there exists $M>0$ such that $|f(z)|\leq M$ for every $z\in B$. Now for every $z\in B,$
		$$S(f)(z)=\displaystyle{\int_0^z f(\xi)\ d\xi}=\displaystyle{\int_0^1 f(zt) z\ dt}$$ by putting $\xi=zt$. Consequently,  $|S(f)(z)|\leq \displaystyle{\int_0^1 |f(zt)| |z|\ dt}\leq M|z|$ for every $z\in B$. Similarly,
		$$S^2(f)(z)=\displaystyle{\int_0^z Sf(\xi)\ d\xi}=\displaystyle{\int_0^1 Sf(zt) z\ dt}$$ by putting $\xi=zt$. Then $|S^2(f)(z)|\leq \displaystyle{\int_0^1 |Sf(zt)| |z|\ dt}\leq M|z|^2\displaystyle{\int_0^1 |t|\ dt}\leq \frac{M|z|^2}{2!} $ for every $z\in B$. Proceeding this way one can prove that $|S^n(f)(z)|\leq \frac{M|z|^n}{n!} \leq \frac{M r^n}{n!}$ for every $n\in \mathbb{N}$ and $z\in B.$ 
		
		Thus $|S^n(f)(z)|\leq \frac{Mr^n}{n!}$ for every $n\in \mathbb{N}$ and $z\in K.$ Hence the sequence $(S^n(f))$ converges to $0$ uniformly on $K$ and so $S^n(f)\to 0$ in $H(\mathbb{C}).$  Therefore from Theorem \ref{ST}, we can conclude that $D$ is strongly transitive.

Further, note that $(S^n(f))$ converges to $0$ uniformly on $K$ and so for every finite $\{f_1, \ldots, f_n \} \subset P$ and so $S_n(f_i) \to 0$ on $K$ for all $i \in \{1, \ldots, n\}$. Also $D^n(f_i)=0$ for all $i \in \{1, \ldots, n\}$. Thus by Theorem \ref{SPT}, $D$ is strongly product transitive.

 \end{Example}
		
         \begin{Example}
         	Let $B : \ell^p(\mathbb{N}) \to \ell^p(\mathbb{N})$ be the backward
		shift operator, defined by $B(x_0, x_1, \ldots) = (x_1, x_2, \ldots ).$ Let  $D$ be the set of all sequences which are eventually zero, then for every $x\in D$ there exists $n\in \mathbb{N}$ such that $B^n(x)=0$. Also if $|\lambda|>1$ then $(\lambda B)^n(x) = 0$ for all $x \in D$.
		
		Define $S: \ell^p(\mathbb{N}) \to \ell^p(\mathbb{N})$ as $S(x_0, x_1, \ldots) = (0, x_0, x_1, x_2, \ldots ).$ Then $BS(x)=x$ 	 for every $x\in \ell^p$.
		
		Again for $|\lambda|>1$ and every $x\in \ell^p$, $\frac{1}{\lambda^n} S^n(x)\to 0$ as $n\to \infty$. Thus from Theorem \ref{ST} we can conclude that $(\ell^p, \lambda B)$ is strongly transitive for any scalar $\lambda$ with $|\lambda| > 1.$
		
		And from Theorem \ref{SPT}, it follows that $(\ell^p, \lambda B)$ is strongly product transitive for any scalar $\lambda$ with $|\lambda| > 1.$
		
         \end{Example}
         \begin{Example}
         	
         	Recall Example \ref{wei-seq}.
         	
         	 Here $T=B_w$ is not strongly transitive as $T$ is not surjective.
         	 
         	  Note that here the backward orbit of $0$ i.e., $\displaystyle{\bigcup_{n}T^{-n}(0)}$ is dense in $\ell^p(\mathbb{N})$ as the set of all sequence which are eventually zero belong to the set $\displaystyle{\bigcup_{n}T^{-n}(0)}$.
		 \end{Example}

We end with a question here that we leave open:

 \textbf{Question:} Are there strongly transitive operators that are not strongly product transitive in linear dynamics?

	

\bigskip
 


	\begin{thebibliography}{99}
     \bibitem{akin} Ethan Akin, Recurrence in Topological Dynamical Systems: Furstenberg Families and Ellis Actions, Plenum
 Press, New York, 1997.

 \bibitem{AN} Ethan Akin, Joseph Auslander, Anima Nagar, Variations on the concept of topological transitivity, Stud. Math. 235 (2016) 225--249.

		
\bibitem{linear} Frederic Bayart and Etienne Matheron, Dynamics of linear operators, Cambridge Tracts in Mathematics, 179. Cambridge University Press, Cambridge (2009).
		
\bibitem{b1} Bernard Beauzamy, Un opérateur, sur l'espace de Hilbert, dont tous les polynômes sont hypercycliques, C. R. Acad. Sci. Paris Sér. I Math. 303 (1986), 923-925. 


\bibitem{b2} Bernard Beauzamy, An operator on a separable Hilbert space with many hypercyclic vectors, Studia Math. 87(1987), 71-78.  

\bibitem{b3} Bernard Beauzamy, Opérateurs de rayon spectral strictement supérieur à 1, C. R. Acad. Sci. Paris Sér. I. Math. 304 (1987), 263-266.  		
		 
\bibitem{Bes} Juan P. Bès, Three problem’s on hypercyclic operators. Ph.D. thesis, Bowling Green State University, Bowling Green, Ohio, 1998.
		 
\bibitem{Bes2} Juan P. Bès and Alfredo Peris, Hereditarily hypercyclic operators, Journal of Functional Analysis, (1999), 167, 94–112.
		
		
\bibitem{bes ST} Juan P. Bès, Quentin Menet, Alfredo Peris and Yunied Puig, Strong transitivity properties for operators, Journal of Differential Euations, (2019), 266, 1313–1337.
		 
\bibitem{birkhoff} George D. Birkhoff, Démonstration d'un théorème élémentaire sur les fonctions entières, C. R. Acad. Sci. Paris 189 (1929), 473-475. 

\bibitem{CS} George Costakis and Martín Sambarino, Topologically mixing hypercyclic operators,
  Proc. Amer. Math. Soc. 132 (2004), no. 2, 385-389.
		
		
\bibitem{F1} Hillel Furstenberg, Disjointness in ergodic theory, minimal sets, and a problem in Diophantine approximation, Math. Syst. Theory 1 (1967) 1–49.

\bibitem{F2} Hillel Furstenberg, Recurrence in Ergodic Theory and Combinatorial Number Theory, M.B. Porter Lectures, Princeton University Press, Princeton, NJ, 1981.

\bibitem{GS} Gilles Godefroy and Joel H. Shapiro, Operators with dense, invariant, cyclic vector manifolds, J. Funct. Anal.
98 (1991), 229–269


\bibitem{composition} Daniel Gomes and K.-G. Grosse-Erdmann, Kitai’s Criterion for composition operators, Journal of Mathematical Analysis and Applications, 547 (2025),  129347.

\bibitem{GH} Walter H. Gottschalk and Gustav A. Hedlund, Topological dynamics, Amer. Math. Soc. Colloq. Publ.,  Vol. 36, Amer. Math. Soc., Providence, RI, 1955.

\bibitem{mixing example} Sophie Grivaux, Hypercyclic operators, mixing operators, and the bounded steps problem, J. Operator Theory 54 (2005),  147-168.

\bibitem{racsum} K-G. Grosse-Erdmann and Alfredo Peris, Weakly mixing operators on topological vector spaces, RACSAM 104(2) (2010), 413–426.


\bibitem{linear chaos} K.-G. Grosse-Erdmann and Alfredo Peris, Linear Chaos, Universitext, 978-1-4471-2170-1, Springer London (2011).

 

\bibitem{prox1} Wen Huang, Song Shao and Xiangdong Ye, Mixing and proximal cells along sequences, Nonlinearity,  17 (4) (2004), 1245–1260.

        

		
\bibitem{kitai} Carol Kitai, Invariant closed sets for linear operators. Ph.D. thesis, University of Toronto, Toronto, 1982.

\bibitem{gm} Gerald R. MacLane,  Sequences of derivatives and normal families,  
J. Analyse Math. 2 (1952), 72-87.
		
\bibitem{M} T. K. Subrahmonian Moothathu, Orbital and spectral aspects of hypercyclic operators and semigroups, Indag. Math.  30 (2019),  1006-1022.

\bibitem{Anima} Anima Nagar, Revisiting variations in topological transitivity,  
Eur. J. Math. 8 (2022),  369-387.

\bibitem{sr} Stefan Rolewicz,  On orbits of elements, 
Studia Math. 32 (1969), 17-22.    

\bibitem{weight} Hector N. Salas, Hypercyclic weighted shifts, Trans. Amer. Math. Soc., 347 (3), (1995) 993–1004.

\bibitem{prox2} Song Shao, Proximity and distality via Furstenberg families, Topology and its Applications, 153 (2006), 2055-2072.
        
        
\bibitem{shapiro}  Joel H. Shapiro, Notes on the dynamics of linear operators, unpublished lecture notes, available at www.math.msu.edu/~shapiro, 2001.
		
    		
		
		
		
		
		
		
		
		
		
		
	\end{thebibliography}
\end{document}